\newtheorem{thm}{Theorem}[section]
\newtheorem*{lem*}{Lemma}
\newtheorem*{rmk*}{Remark}
\newtheorem{lem}[thm]{Lemma}
\newcommand{\vf}{\mathbf{f}} 
\newcommand{\q}{\mathbf{q}} 
\newcommand{\K}{\mathbf{K}} 
\newcommand{\PP}{\mathbf{P}} 
\newcommand{\bR}{\mathbb{R}} 
\newcommand{\R}{\mathbb{R}} 
\newcommand{\vU}{\mathbf{U}} 
\newcommand{\cM}{\mathcal{M}} 
\newcommand{\M}{\mathcal{M}} 
\newcommand{\cX}{\mathcal{X}} 
\newcommand{\vPi}{\boldsymbol{\Pi}} 
\newcommand{\vpi}{\boldsymbol{\pi}} 
\newcommand{\vSigma}{\boldsymbol{\Sigma}} 
\newcommand{\vLambda}{\boldsymbol{\Lambda}} 
\newcommand{\vPsi}{\boldsymbol{\Psi}} 
\newcommand{\vPhi}{\boldsymbol{\Phi}} 
\newcommand{\vA}{\mathbf{A}} 
\newcommand{\vV}{\mathbf{V}} 
\newcommand{\vdelta}{\boldsymbol{\delta}} 
\newcommand{\T}{\mathcal{T}} %
\newcommand{\defeq}{:=} 
\newcommand{\Span}{\text{\normalfont Span}} 
\newcommand{\bigo}{\mathcal{O}} 
\title{Time Coupled Diffusion Maps}
\author[]{Nicholas F. Marshall}
\address{Department of Mathematics, Yale University, New Haven, CT 06511, USA}
\email{nicholas.marshall@yale.edu}
\author[]{Matthew J. Hirn}
\address{Department of Computational Mathematics, Science \&
  Engineering and Department of Mathematics, Michigan State
  University, East Lansing, MI 48824, USA}
\email{mhirn@msu.edu}
\keywords{Manifold learning; Dimensionality reduction; Diffusion distance; Heat
equation; Time-dependent metric}
\begin{document}

\maketitle

\begin{abstract}
We consider a collection of $n$ points in $\R^d$ measured at $m$ times, which
are encoded in an $n \times d \times m$ data tensor. Our objective is to define
a single embedding of the $n$ points into Euclidean space which summarizes the
geometry as described by the data tensor. In the case of a fixed data set,
diffusion maps and related graph Laplacian methods define such an embedding via
the eigenfunctions of a diffusion operator constructed on the data.  Given a
sequence of $m$ measurements of $n$ points, we introduce the notion of time
coupled diffusion maps which have natural geometric and probabilistic
interpretations. To frame our method in the context of manifold learning, we
model evolving data as samples from an underlying manifold with a time-dependent
metric, and we describe a connection of our method to the heat equation on such
a manifold.
\end{abstract}

\section{Introduction} \label{sec:introduction}
In many machine learning and signal processing tasks, the observable data is
high dimensional, but it lies on a low-dimensional intrinsic manifold. In recent
years, several manifold learning methods have emerged which attempt to recover
the intrinsic manifold underlying datasets. In particular, graph Laplacian
methods have become popular due to their practicality and theoretical guarantees
\cite{Belkin:2003, Lafon:2004, Belkin:2005, Coifman:2006,
singer:GraphToManifold2006, singer:vectorDiffMaps2011, wolf:linearProjDiff2011,
berry:localKernels2015}. 

Current graph Laplacian methods implicitly assume a static intrinsic manifold,
or equivalently, that the dynamics underlying the data generation process are
stationary. For many applications, this stationary assumption is justified, as
datasets often consist of a single snapshot of a system, or are recorded over
small time windows. However, in the case where data is accumulated over longer
periods of time, accounting for changing dynamics may be advantageous.
Furthermore, if a system is particularly noisy, combining a large number of
snapshots over time may help recover structure hidden in noise. These
observations raise the following question: how can graph Laplacian methods be
extended to account for changing dynamics while maintaining theoretical
guarantees? 

In this paper, we propose modeling data with changing dynamics by assuming there
exists an underlying intrinsic manifold with a time-dependent metric.  We will
describe the proposed method using the diffusion maps framework: a popular graph
Laplacian framework which is robust to non-uniform sampling \cite{Coifman:2006}.
We remark that diffusion maps are highly related to other manifold learning
methods such as Laplacian eigenmaps and spectral clustering.  In fact, if data
is uniformly sampled from the underlying manifold, diffusion maps
\cite{Coifman:2006} is essentially eigenvalue weighted Laplacian eigenmaps
\cite{Talmon:2013}. 

Although we assume points on the intrinsic manifold are fixed, their geometry,
i.e, dependence structure, is allowed change. We can conceptualize samples from
a manifold with a time-dependent metric by considering a corresponding point
cloud smoothly moving through $\R^d$ produced by isometrically embedding the
manifold over time. The evolution of the metric dictates the movement of
points, and vice versa. In practice, datasets conforming to this model are
commonly encountered, e.g., an RGB video feed consists of a collection of $n$
pixels which move through $\R^3$. 

In general, we consider data consisting of a collection of $n$ points in $\R^d$
measured at $m$ times encoded in an $n \times d \times m$ data tensor $X$. The
tensor $X$ can be expressed as a sequence $(X_1,\ldots,X_m)$ of $n \times d$
matrices whose entries correspond across the sequence.  Given such as sequence
$(X_1,\ldots,X_m)$, the time coupled diffusion map framework introduced in this
paper is based on the product operator:
\begin{equation} \label{eqn: product operator}
\PP^{(m)} = \PP_m \PP_{m-1} \cdots \PP_2 \PP_1,
\end{equation}
where each $\PP_i$ is a diffusion operator constructed from $X_i$. We will show
that this discrete diffusion process, which is formally defined in the following
section, approximates a continuous diffusion process on an assumed underlying
manifold with a time-dependent metric. Additionally, we introduce the
notion of time coupled diffusion maps, named thus because the time
evolution of the data has been coupled to the time evolution of a
diffusion process.
 
\subsection{Related works}

In the diffusion geometry literature, several techniques have been developed,
which also utilize multiple diffusion kernels for a variety of objectives
including: iteratively refining the representation of data, facilitating
comparison, and combing multiple measurements of a fixed system.

An early example of a multiple kernel method is the denosing algorithm of Szlam,
Maggioni, and Coifman \cite{Szlam:2008}, which iteratively smooths an image via
an anisotropic diffusion process. That is, the algorithm switches between
constructing a diffusion kernel on a given data set (in this case an image), and
applying the constructed kernel to the data: 
$$
X_i \rightarrow \PP_i, \quad X_{i+1} = \PP_i X_i
$$
where the arrow denotes that $\PP_i$ is constructed based on $X_i$. More
recently, in \cite{welp:condensationSingleCell2016} Welp, Wolf, Hirn, and
Krishnaswamy introduce an iterative diffusion based construction, which acts to
course grain data. From a theoretical perspective, both of these methods can be
considered in the context of the time coupled diffusion framework introduced in
this paper.

In \cite{Wang:2012} Wang, Jiang, Wang, Zhou, and Tu introduce the notion of
Cross Diffusion as a metric fusion algorithm with applications to image
processing. They demonstrate how multiple metrics on a given data set
can be combined by considering the iterative cross diffusion
\[
\PP^{(t+1)}_1 = \PP_1 \PP_2^{(t)} \PP_1^T, \quad \text{and} \quad
\PP^{(t+1)}_2 = \PP_2 \PP_1^{(t)} \PP_2^T, 
\]
where $\PP_1^{(0)} = \PP_1$ and $\PP_2^{(0)} = \PP_2$ are constructed from two different metrics on the
given data. A generalized method for $m$ metrics is also described.

In \cite{Hirn:2014} Coifman and Hirn present a method of extending diffusion
maps to allow comparisons across multiple measurements of a system, even when
such measurements are of different modalities. 

In \cite{Lederman:2014, Lederman:2015b}
Lederman and Talmon introduce the idea of Alternating Diffusion: a method of
combining measurements from multiple sensors to exact the common source of
variability (i.e., the common manifold), while filtering out sensor specific
effects. The method is based on the product operators 
\[
\PP_1 \PP_2, \quad \text{and} \quad \PP_2 \PP_1,
\]
where $\PP_1$ and $\PP_2$ are constructed from two different views of the data.
In \cite{Lindenbaum:multiviewDiff2015}, Lindenbaum, Yeredor, Salhov, and
Averbuch follow a similar approach, but concatenate a collection of alternating
products in block matrix defining Multi-View Diffusion Maps.  Recently, several
applications and extensions of Alternating Diffusion have been developed. In
\cite{Lederman:2015b} Lederman, Talmon, Wu, Lo, and Coifman demonstrate an
application of Alternating Diffusion to sleep stage assessment.  In
\cite{Talmon:2016} Talmon and Wu describe a general notion of nonlinear manifold
filtering, which extracts a common manifold from multiple sensors.

Our work extends the current diffusion maps literature by considering evolving
dynamics rather than enhancing the analysis of a fixed system. We consider an $n
\times d \times m$ data tensor describing a system of $n$ points in $\bR^d$ over
$m$ times, and seek to construct a manifold model and diffusion geometry
framework for this setting. A similar framework is considered by Banisch and
Koltai in \cite{banisch:dmTrajectory2016}. However, rather than study the
product operator \eqref{eqn: product operator}, they study the sum of the
operators $\PP_i$ and prove a relation with the dynamic Laplacian.

\subsection{Organization}
The remainder of the paper is organized as follows. In Section \ref{sec:tcdm} we
describe the construction of time coupled diffusion maps. In Section
\ref{sec:manifold} we establish a connection between the product operator
$\PP^{(t)}$ and the heat kernel on the assumed underlying manifold with a
time-dependent metric. In Section \ref{sec:numerical} we present numerical
results on synthetic data. Section \ref{sec: cont tcdm} investigates the
continuous analog of the operator $\PP^{(t)}$ and the corresponding time coupled
diffusion distance. Concluding remarks are given in Section
\ref{sec: conclusion}. 

\section{Time coupled diffusion maps} \label{sec:tcdm}
In this section, we introduce the notion of time coupled diffusion maps. 

\subsection{Notation} \label{sec:2.2} Let $\M$ be a compact smooth manifold with
a smooth $1$-parameter family of Riemannian metrics $g(\tau)$, $\tau \in [0,T]$,
$T < \infty$. We refer to the parameter $\tau$ as time throughout. For each time
$\tau$, let $\iota_{\tau}: \M \hookrightarrow \R^d$, $\M_{\tau} =
\iota_{\tau}(\M)$, denote an isometric embedding of $(\M,g(\tau))$ into
$d$-dimensional Euclidean space, where $d$ is fixed for all $\tau \in [0,T]$.
Let $X = \{x_j\}_{j=1}^n \subseteq \M$ denote a finite collection of $n$ points
sampled from $\M$ and let $(\tau_0,\tau_1,\ldots,\tau_m)$ be a uniform partition
of $[0,T]$ with $\tau_0 = 0$ and $\tau_m = T$.  We assume that our data consists
of measurements of the $n$ points $X$ at times $\tau_1,\ldots,\tau_m$ such that
we have one measurement set for each of the time intervals $(\tau_{i-1},\tau_i]$
for $i=1,\ldots,m$.  More precisely, our data will consist of a sequence of $m$
sets $(X_1,\ldots, X_m)$, where $X_i= \iota_{\tau_i}(X) =
\{x_j^{(i)}\}_{j=1}^n$; see Figure \ref{fig: data model} for an illustration. 
%
%
\begin{figure}[h]
\centering
\begin{tabular}{ccc}
$\R^d$ &
$\tau_1,\tau_2,\tau_3,\ldots,\tau_m$ &
$\big(\cM,g(\tau) \big)$ \\
\begin{minipage}{2.2in}
\[
\left.
\begin{array}{ccccc}
\includegraphics[scale= .08]{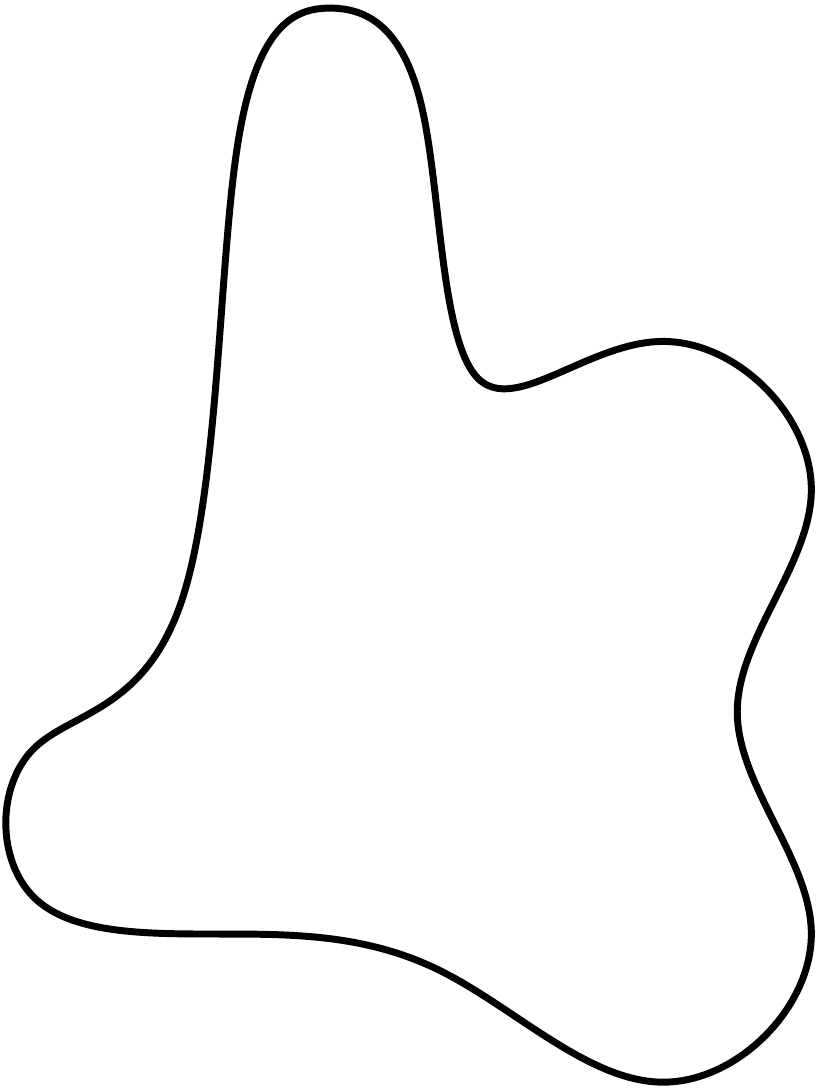} &
\includegraphics[scale = .08]{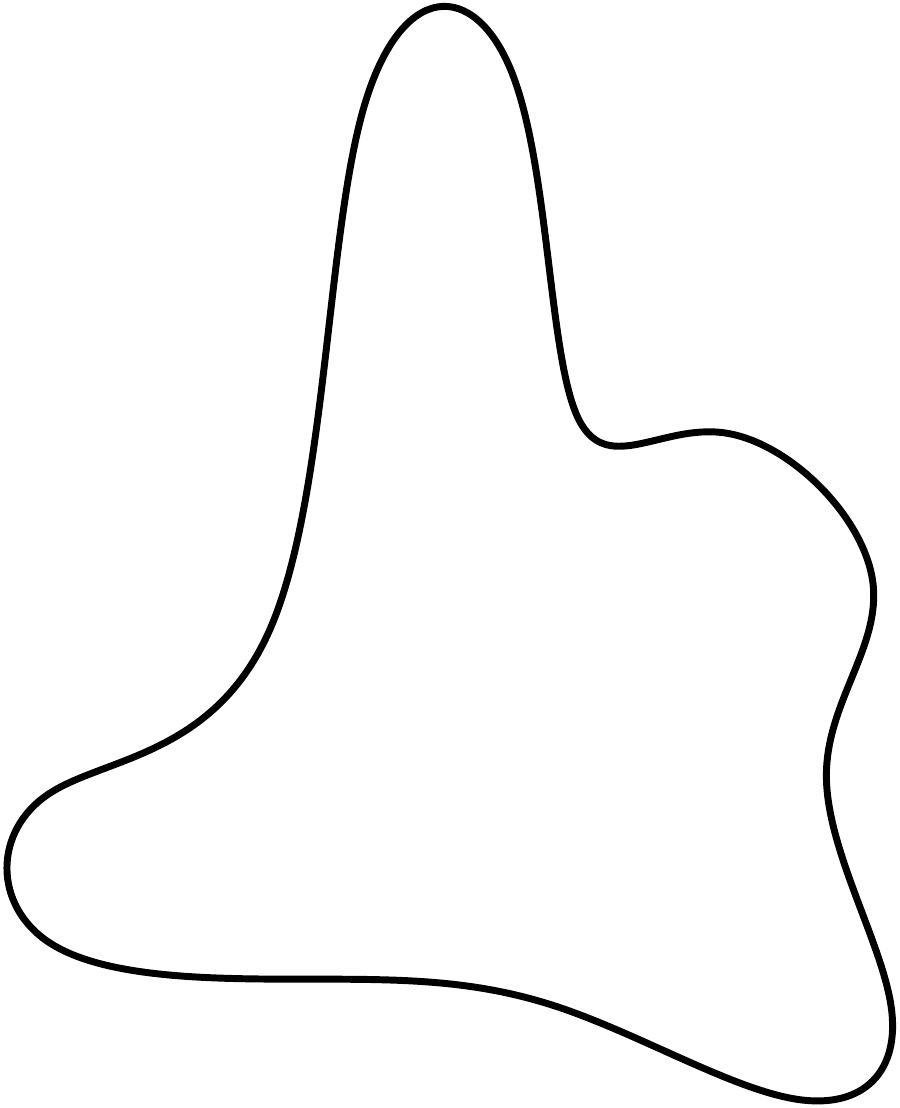} &
\includegraphics[scale = .08]{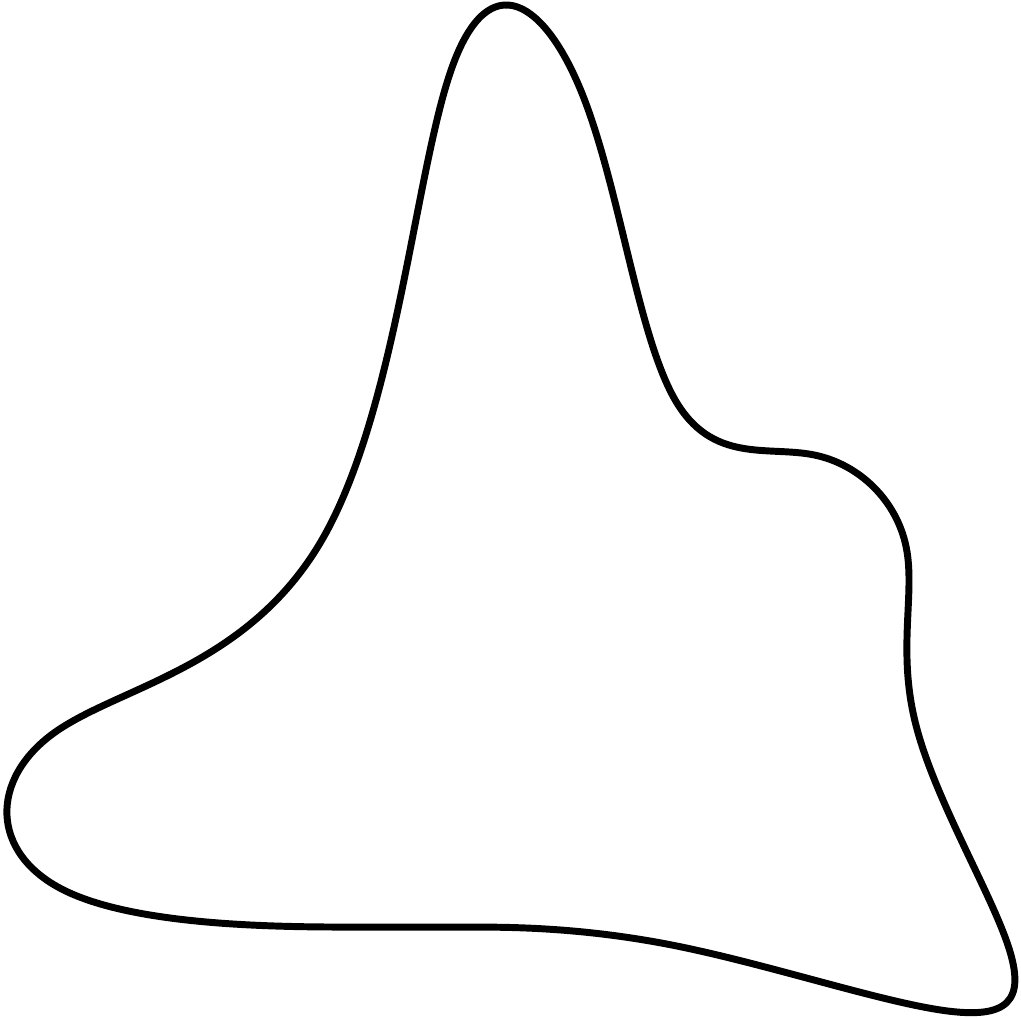} &
{ \cdots }&
\includegraphics[scale = .08]{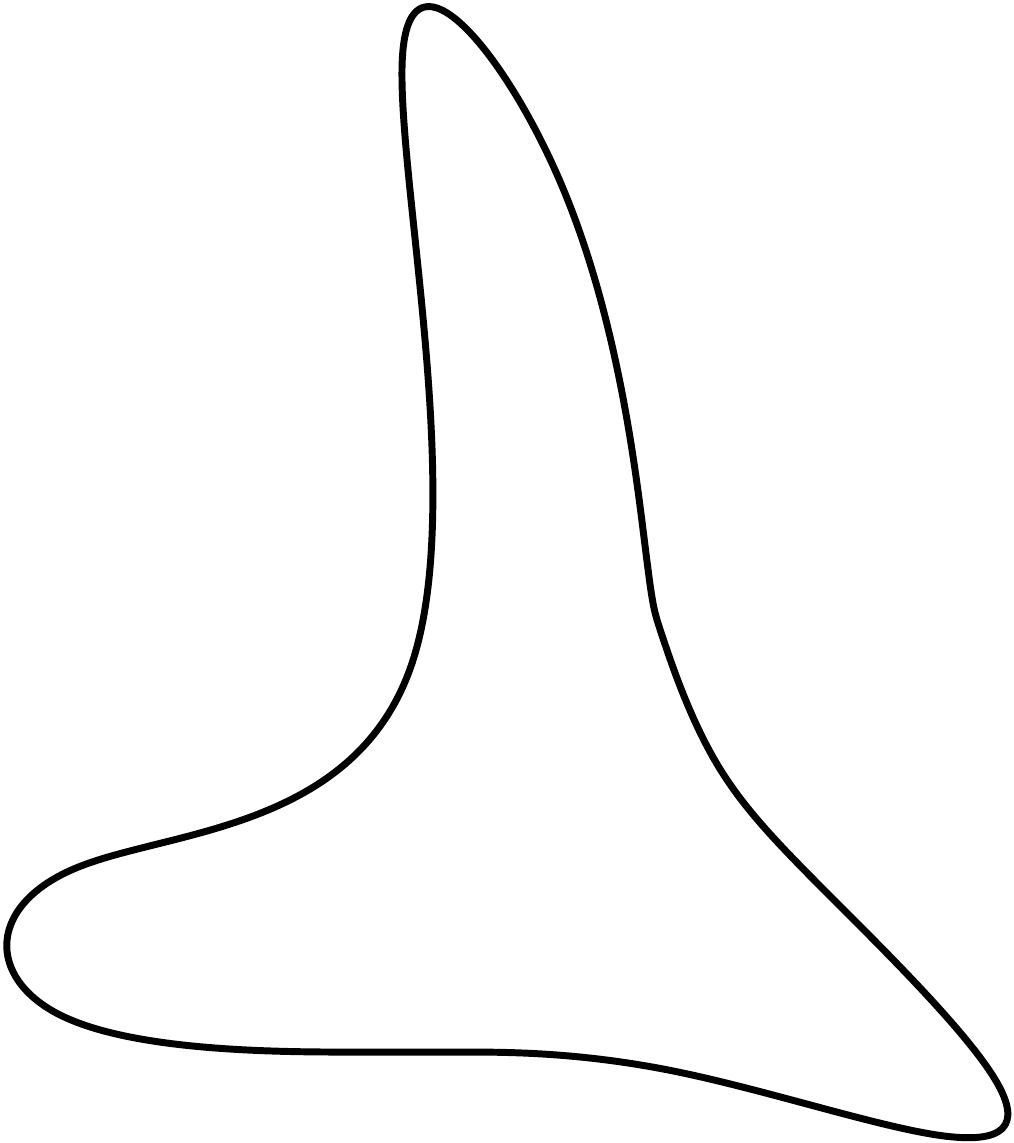} \\
X_1 & X_2 & X_3 & & X_m \\
\end{array} 
\right\}
\]
\end{minipage} &
\begin{minipage}{.9in}
\[ \includegraphics[width = .4in]{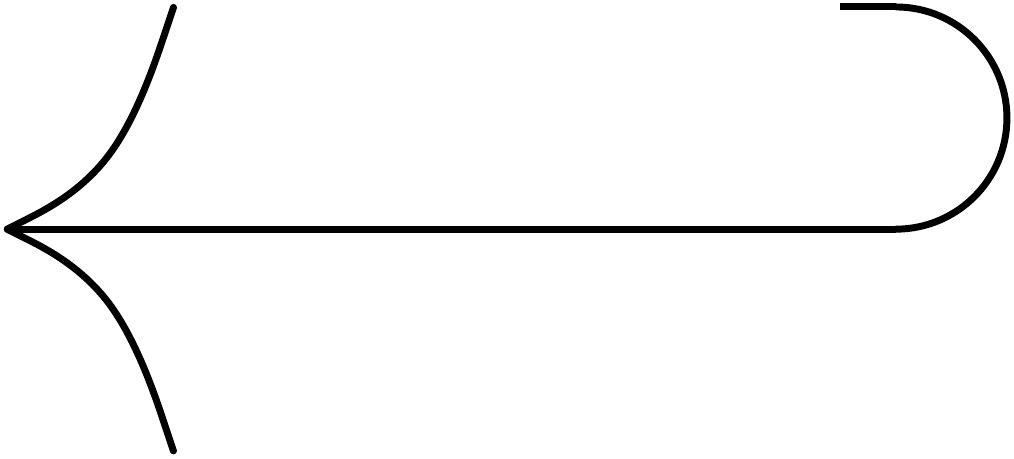} \]
\end{minipage}
&
\begin{minipage}{1in}
\[ \vcenter{\hbox{\includegraphics[scale = .14]{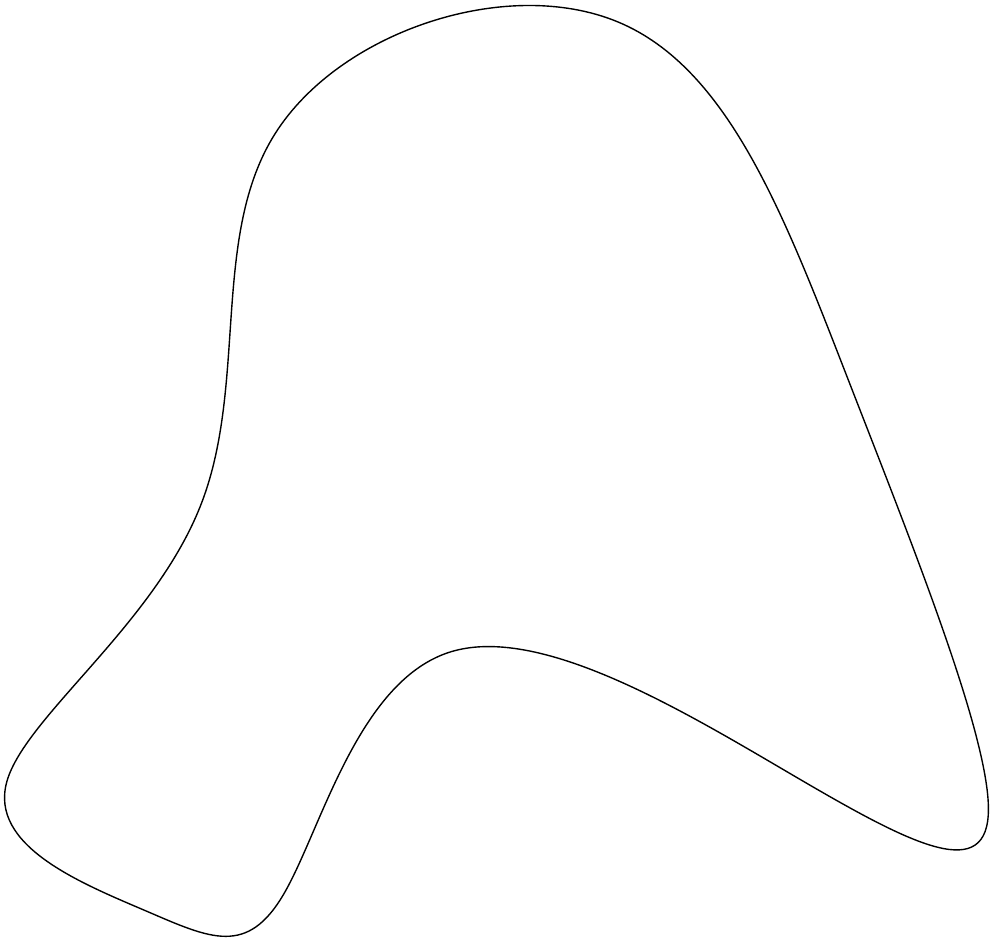}}}\] 
\end{minipage} \\
\end{tabular}
\caption{Illustration of the data model}
\label{fig: data model}
\end{figure}

Such data can be represented as an $n \times d \times m$ tensor corresponding to
$n$ points in $\R^d$ measured at $m$ times. Suppose each $X_i$ is distributed
over $\M_{\tau_i}$ according to a density $q_{\tau_i}: \M_{\tau_i} \rightarrow
\R$. We assume that at time $\tau_1$, the $n$ points $X_1 =
\{x_j^{(1)}\}_{j=1}^n$ are sampled independently
from $\M_{\tau_1}$ according to the density $q_{\tau_1}$. Since no
re-sampling occurs, any changes in the densities $q_{\tau_i}$, for
$i=1,\ldots,m$, result from deformations of $q_{\tau_1}$ induced by changes in
the Riemannian volume of $\M$, which itself is induced from changes in the
Riemannian metric $g(\tau)$ over time. In particular, $x_j^{(i)} =
\iota_{\tau_i} \circ \iota_{\tau_1}^{-1} (x_j^{(1)})$ and $q_{\tau_i}(x) =
q_{\tau_1}(\iota_{\tau_1} \circ \iota_{\tau_i}^{-1}(x)) | \det(D(\iota_{\tau_1}
\circ \iota_{\tau_i}^{-1}))(x)|$.  

\begin{rmk*} Two remarks are in order. 
\begin{enumerate}
\item Even if the initial sampling density is uniform, the changing geometry may
alter the density over time. Hence, the density invariant kernel construction
described in \cite{Lafon:2004, Coifman:2006} plays an essential role in our
construction. We assume the minimum sampling density over time is bounded
below and the associated error term enters our error analysis as a constant. We
refer the reader to a recent paper by Berry and Harlim \cite{Berry:2015} for a
detailed error analysis of variable density kernel constructions. 
\item Since we have assumed the initial samples $X_1$ are i.i.d., then for fixed
$i$, each set of samples $X_i$ are also i.i.d since $X_i$ is the continuous,
hence measurable, function $\iota_{\tau_i} \circ \iota^{-1}_{\tau_1}$ of the
i.i.d. variables $X_1$.
\end{enumerate}
\end{rmk*}

\subsection{Kernel construction}
Given a sequence of data sets $(X_1,\ldots,X_m)$ as described above, we proceed
as follows. For each data set $X_i$, we construct a diffusion operator
$\PP_{\varepsilon,i}$ following the diffusion maps framework
\cite{Coifman:2006}. For completeness, we include the details of the
construction of $\PP_{\varepsilon,i}$ in the following. For each time index $i$,
we define a Gaussian kernel $\K_{\varepsilon,i}$ on $X$ using the measurements
$X_i$,
\begin{equation} \label{eqn:markov1} 
\K_{\varepsilon, i}(x_j,x_k) =
\exp\left( - \frac{\|x_j^{(i)}-x_k^{(i)}\|^2_2}{4\varepsilon} \right),
\quad x_j, x_k \in X \text{ and } x_j^{(i)}, x_k^{(i)} \in X_i,
\end{equation} 
where $\|\cdot\|_2$ denotes the Euclidean norm in the ambient space
$\bR^d$. By summing over the second variable of the kernel
$\K_{\varepsilon,i}$ we approximate the density $q_{\tau_i}$ by
\begin{equation} \label{eqn:markov2} 
\q_{\varepsilon, i}(x_j) =
\sum_{k=1}^n \K_{\varepsilon,i}(x_j,x_k), 
\quad x_j \in X.
\end{equation} 
Using $\q_{\varepsilon,i}$ we normalize the kernel
$\K_{\varepsilon,i}$ as follows
\begin{equation} \label{eqn:markov3} 
\widetilde{\K}_{\varepsilon, i}(x_j,x_k) = 
\frac{\K_{\varepsilon, i}(x_j, x_k)}{\q_{\varepsilon, i} (x_j) \q_{\varepsilon,i}(x_k)},
\quad x_j, x_k \in X.  
\end{equation} 
Then the corresponding diffusion operator $\PP_{\varepsilon,i} : \R^n
\rightarrow \R^n$ is given by
\begin{equation} \label{eqn:markov4}
(\PP_{\varepsilon,i} f)(x_j) = 
\sum_{k=1}^n \frac{\widetilde{\K}_{\varepsilon,i}(x_j,x_k)}{
\sum_{l=1}^n \widetilde{\K}_{\varepsilon,i}(x_j,x_l)} f(x_k).
\end{equation}
Each $\PP_{\varepsilon,i}$ is a matrix which is a diffusion operator
when applied to column vectors on the left, and a Markov operator when
applied row vectors on the right. For $1 \le t \le m$, define:
\begin{equation} \label{eqn:Pt} \PP_{\varepsilon}^{(t)} =
\PP_{\varepsilon,t} \PP_{\varepsilon,t-1} \cdots \PP_{\varepsilon,1}.
\end{equation}
Like its constituent components, the matrix $\PP_{\varepsilon}^{(t)}$ is
a diffusion operator when acting on column vectors from the left and a
Markov operator when acting on row vectors from the right. As
a Markov operator, $\PP_{\varepsilon}^{(t)}$ acts backwards in time, in
the sense that $\PP_{\varepsilon}^{(t)}$ acts by first applying
$\PP_{\varepsilon,t}$, second $\PP_{\varepsilon,t-1}$ and so on.
On the other hand, as a diffusion operator $\PP_{\varepsilon}^{(t)}$ acts
forward in time, first applying $\PP_{\varepsilon,1}$, second
applying $\PP_{\varepsilon,2}$ and so on. We have chosen the
ordering convention in equation \eqref{eqn:Pt} to favor the geometric
interpretation. 

\subsection{Time coupled diffusion distance}
Let $\vdelta_j$ denote a Dirac distribution centered at $x_j$, i.e.,
$\vdelta_j(x_j) = 1$ and $\vdelta_j=0$ elsewhere.  We compare the points $x_j$
and $x_k$ by comparing the posterior distributions of $\vdelta_j^T$ and
$\vdelta_k^T$ under the Markov operator $\PP_{\varepsilon}^{(t)}$. More
specifically, following \cite{Coifman:2006} we define a diffusion based distance
as the $L^2$ distance between these posterior distributions weighted by the
reciprocal of the stationary distribution of the Markov chain. That is, we
define the distance $D^{(t)}_\varepsilon$ by 
\begin{equation} \label{eqn: tcdd}
D_\varepsilon^{(t)}(x_j,x_k) = \| \vdelta_j^T \PP_{\varepsilon}^{(t)}
- \vdelta_k^T \PP_{\varepsilon}^{(t)} \|_{L^2(1/\vpi_{(t)})},
\end{equation}
where $\vpi_{(t)}$ is the stationary distribution of
$\PP_{\varepsilon}^{(t)}$, i.e., $\vpi_{(t)}^T \PP_{\varepsilon}^{(t)} =
\vpi_{(t)}^T$, and $\|\cdot\|_{L^2(1/\vpi_{(t)})}$ is the weighted $L^2$ norm:
\begin{equation} \label{eq:weightl2}
\|\vf \|_{L^2(1/\vpi_{(t)})} \defeq \sqrt{ \sum_{j=1}^n \vf(x_j)^2
\frac{1}{\vpi_{(t)}(x_j)}}.
\end{equation}
We refer to $D_{\varepsilon}^{(t)}$ as the \textit{time coupled diffusion
distance}, since it extends the diffusion distance in \cite{Coifman:2006} by
coupling the evolution time of the data $(X_1,\ldots,X_t)$ with that of the
diffusion process governed by
$(\PP_{\varepsilon,1},\ldots,\PP_{\varepsilon,t})$. Our next objective is to
construct a \textit{time coupled diffusion map}, that is, an embedding of $X$
into Euclidean space which preserves the time coupled diffusion distance. For
notational brevity, we suppress the dependence on $\varepsilon$  in the
following. 

\subsection{Time coupled diffusion map}
Recall that the diffusion map for a static manifold $(\M, g)$ is defined in
terms of the eigenvectors and eigenvalues of the transition matrix $\PP$, which
is constructed in the same manner as \eqref{eqn:markov4}. The inhomogeneous
transition operator $\PP^{(t)}$ is also row stochastic, but unlike $\PP$,
does not necessarily have a complete basis of real eigenvectors. Instead, 
we define the operator $\vA^{(t)}$ by
\begin{equation} \label{eqn:PiP}
\vA^{(t)} = \vPi^{1/2}_{(t)} \PP^{(t)} \vPi_{(t)}^{-1/2},
\end{equation}
where $\vPi_{(t)}$ denotes the matrix with the stationary distribution
$\vpi_{(t)}$ of $\PP^{(t)}$  along the diagonal and zeros elsewhere. First,
observe that $\vpi^{1/2}_{(t)}$ is both a right and left eigenvector of
$\vA^{(t)}$ with eigenvalue $1$. In fact, $\vA^{(t)}$ has operator norm one and
naturally arises when constructing a diffusion framework starting with a Markov
chain, see \ref{dmfrommarkov}. Next, we compute the singular value
decomposition (SVD) of $\vA^{(t)}$:
\begin{equation} \label{eqn:Asvd}
\vA^{(t)} = \vU_{(t)} \vSigma_{(t)} \vV^T_{(t)},
\end{equation}
where $\vU_{(t)}$ is an orthogonal matrix of left singular vectors,
$\vSigma_{(t)}$ is a diagonal matrix of corresponding singular values, and
$\vV_{(t)}$ is an orthogonal matrix of right singular vectors. Define 
\begin{equation} \label{eqn:Psi}
 \vPsi^{(t)} \defeq \vPi_{(t)}^{-1/2} \vU_{(t)} \vSigma_{(t)}.
\end{equation}
%
%
\begin{lem} \label{lem: embedding} The embedding 
\begin{equation} \label{eqn:tcdm}
x_j \mapsto \vdelta_j^T \vPsi^{(t)}
\end{equation}
of the data $X$ into Euclidean space preserves the time coupled diffusion
distance. That is to say,
\[
D^{(t)}(x_j,x_k) = \| \vdelta_j^T \vPsi^{(t)} - \vdelta_k^T \vPsi^{(t)}
\|_{L^2}.
\]
We refer to the embedding  $x_j \mapsto \vdelta_j^T \vPsi^{(t)}$  as the
time coupled diffusion map.  
\end{lem}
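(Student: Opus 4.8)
The plan is to reduce the claimed isometry to a single linear-algebra identity, namely that the Gram matrix $\PP^{(t)}\vPi_{(t)}^{-1}(\PP^{(t)})^T$ coincides with $\vPsi^{(t)}(\vPsi^{(t)})^T$. First I would put the weighted norm \eqref{eq:weightl2} in matrix form: for a row vector $\vf \in \R^n$ one has $\|\vf\|_{L^2(1/\vpi_{(t)})}^2 = \sum_{j} \vf(x_j)^2/\vpi_{(t)}(x_j) = \vf\,\vPi_{(t)}^{-1}\vf^T$. This is legitimate because $\PP^{(t)}$ is a \emph{strictly positive} matrix — it is a product of operators built from the strictly positive Gaussian kernel \eqref{eqn:markov1} — so by Perron–Frobenius its stationary distribution $\vpi_{(t)}$ has strictly positive entries and $\vPi_{(t)}$ is invertible (this is the one point I would flag explicitly). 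Applying the identity with $\vf = (\vdelta_j-\vdelta_k)^T\PP^{(t)}$ gives
\[
D^{(t)}(x_j,x_k)^2 = (\vdelta_j-\vdelta_k)^T\,\PP^{(t)}\vPi_{(t)}^{-1}(\PP^{(t)})^T\,(\vdelta_j-\vdelta_k).
\]

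Next I would substitute the defining relation \eqref{eqn:PiP} in the form $\PP^{(t)} = \vPi_{(t)}^{-1/2}\vA^{(t)}\vPi_{(t)}^{1/2}$, noting $\vPi_{(t)}$ is diagonal hence symmetric. The interior factors collapse via $\vPi_{(t)}^{1/2}\vPi_{(t)}^{-1}\vPi_{(t)}^{1/2} = I$, yielding $\PP^{(t)}\vPi_{(t)}^{-1}(\PP^{(t)})^T = \vPi_{(t)}^{-1/2}\,\vA^{(t)}(\vA^{(t)})^T\,\vPi_{(t)}^{-1/2}$. Then I plug in the SVD \eqref{eqn:Asvd}: since $\vV_{(t)}$ is orthogonal, $\vA^{(t)}(\vA^{(t)})^T = \vU_{(t)}\vSigma_{(t)}^2\vU_{(t)}^T$, so the Gram matrix equals $\bigl(\vPi_{(t)}^{-1/2}\vU_{(t)}\vSigma_{(t)}\bigr)\bigl(\vPi_{(t)}^{-1/2}\vU_{(t)}\vSigma_{(t)}\bigr)^T$, which is precisely $\vPsi^{(t)}(\vPsi^{(t)})^T$ by the definition \eqref{eqn:Psi} of $\vPsi^{(t)}$ (using again that $\vPi_{(t)}$ and $\vSigma_{(t)}$ are symmetric).

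Finally I would read off the conclusion: $D^{(t)}(x_j,x_k)^2 = (\vdelta_j-\vdelta_k)^T\vPsi^{(t)}(\vPsi^{(t)})^T(\vdelta_j-\vdelta_k) = \|\vdelta_j^T\vPsi^{(t)} - \vdelta_k^T\vPsi^{(t)}\|_{L^2}^2$, which is exactly the asserted identity. I do not expect a genuine obstacle here; the argument is a short computation once the row-versus-column bookkeeping is fixed and the invertibility of $\vPi_{(t)}$ is justified. The conceptual point worth emphasizing — and the reason this is the correct substitute for the eigendecomposition used in the static case \cite{Coifman:2006} — is that diagonalizability of $\PP^{(t)}$ is never needed: the SVD of the symmetrized operator $\vA^{(t)}$, which always exists, carries all the information required to realize the time coupled diffusion distance as a Euclidean distance.
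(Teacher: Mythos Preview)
Your proof is correct and follows essentially the same route as the paper: both arguments rewrite the weighted norm via $\vPi_{(t)}^{-1/2}$, replace $\PP^{(t)}\vPi_{(t)}^{-1/2}$ by $\vPi_{(t)}^{-1/2}\vA^{(t)}$ using \eqref{eqn:PiP}, insert the SVD \eqref{eqn:Asvd}, and use orthogonality of $\vV_{(t)}$ to strip it off. The only cosmetic difference is that you square and work with the Gram matrix $\vPsi^{(t)}(\vPsi^{(t)})^T$ while the paper stays at the level of norms throughout; your explicit Perron--Frobenius justification for the invertibility of $\vPi_{(t)}$ is a nice addition the paper leaves implicit.
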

%
%
\begin{proof} By the definition of time coupled diffusion distance \eqref{eqn: tcdd}, and by definition of the weighted $L^2$ norm \eqref{eq:weightl2} 
\begin{equation} \label{eq:step1}
D^{(t)}(x_j,x_k) = \| \vdelta_j^T \PP^{(t)} - \vdelta_k^T \PP^{(t)}
\|_{L^2(1/\vpi_{(t)})} = \| \vdelta_j^T \PP^{(t)} \vPi^{-1/2} - \vdelta_k^T
\PP^{(t)} \vPi^{-1/2}
\|_{L^2}.
\end{equation}
Multiplying equation \eqref{eqn:PiP} by $\vPi_{(t)}^{-1/2}$ yields 
\[
\vPi^{-1/2}_{(t)} \vA^{(t)} =  \PP^{(t)} \vPi_{(t)}^{-1/2},
\]
and substituting this expression into \eqref{eq:step1} gives
\[
D^{(t)}(x_j,x_k) = 
\| \vdelta_j^T \vPi^{-1/2}_{(t)} \vA^{(t)} - \vdelta_k^T  \vPi^{-1/2}_{(t)}
\vA^{(t)} \|_{L^2}.  \]
Expanding $\vA^{(t)}$ in its singular value decomposition \eqref{eqn:Asvd}
yields,
\[
D^{(t)}(x_j,x_k) = 
\| \vdelta_j^T \vPi^{-1/2}_{(t)} \vU_{(t)} \vSigma_{(t)} \vV^T_{(t)} - \vdelta_k^T \vPi^{-1/2}_{(t)} \vU_{(t)} \vSigma_{(t)} \vV^T_{(t)} \|_{L^2}.
\]
Now, since the transformation $\vV_{(t)}$ is orthogonal,
\[
D^{(t)}(x_j,x_k)  = \| \vdelta_j^T \vPi^{-1/2}_{(t)} \vU_{(t)} \vSigma_{(t)} -
\vdelta_k^T \vPi^{-1/2}_{(t)} \vU_{(t)} \vSigma_{(t)} \|_{L^2},
\]
and substituting $\vPsi^{(t)} = \vPi_{(t)}^{-1/2} \vU_{(t)} \vSigma_{(t)}$ into
this equation yields the result.
\end{proof}

\begin{rmk*} Several remarks regarding time coupled diffusion maps are in order.
\begin{enumerate}
\item If $t=1$, the time coupled diffusion map \eqref{eqn:tcdm} is equivalent to
the definition of the standard density invariant diffusion map in
\cite{Coifman:2006}, see the calculations in \ref{dmfrommarkov}, which are
motivated by similar calculations in \cite{Lafon:2004}.  
\item The first coordinate of the embedding \eqref{eqn:tcdm} is always constant
because $\vpi_{(t)}^{1/2}$ is the top left singular vector of $\vA^{(t)}$, and
therefore, this coordinate of the embedding can be discarded.  
\item In order to produce an embedding of $X$ into $\bR^l$ for some $l > 0$, we
can compute a rank $l+1$ singular value decomposition in equation
\eqref{eqn:Asvd} and map
\begin{equation*}
x_j \mapsto \vdelta_j^T \vPsi^{(t)}_l,
\end{equation*}
where $\vPsi^{(t)}_l$ denotes the
matrix consisting of columns $2$ through $l+1$ of the matrix $\vPsi^{(t)}$.
Since the singular values of $\vA^{(t)}$ are of the form $1 = \sigma_0 >
\sigma_1 \ge \cdots \ge \sigma_{n-1}$, the truncated embedding will preserve the
diffusion distance up to an error on the order of $\sigma_{l+1}$. As in other 
diffusion based methods, in the case where data lies on a low dimensional
manifold, we expect the first few coordinates of the embedding to provide a
meaningful summary of the data.
\end{enumerate}
\end{rmk*}

\subsection{Comparison to standard diffusion maps}
It is instructive at this point to make a comparison with the original diffusion
maps of Coifman and Lafon \cite{Coifman:2006}. In that setting, one is given
samples $X$ of an isometric embedding of a manifold $\M$ with a \emph{static}
metric $g$.  The density invariant version of diffusion maps uses the same
construction as outlined in equations \eqref{eqn:markov1}, \eqref{eqn:markov2},
\eqref{eqn:markov3}, and \eqref{eqn:markov4}, which yields a Markov matrix
$\PP$, which is not indexed by a time $i$ since the metric is static.
Running this homogeneous Markov chain forward $t$ steps is equivalent to
composing $\PP$ with itself $t$ times, i.e., $\PP^t$. Since $\PP$ is, by
construction, similar to a symmetric matrix, it can be decomposed into a
diagonal matrix of real eigenvalues $\vLambda$ and a basis of right eigenvectors
$\vPhi$. Since the left eigenvectors of $\PP$ are orthogonal in $L^2(1/\vpi)$,
the diffusion map
\begin{equation} \label{eqn: original diffusion maps}
x_j \mapsto \vdelta_j^T \vPhi \vLambda^t,
\end{equation}
preserves the diffusion distance $D^t(x_j, x_k) = \| \vdelta_j^T \PP^t -
\vdelta_k^T \PP^t \|_{L^2(1/\vpi)}$, where $\vpi$ is the stationary distribution
of $\PP$. A main result of \cite{Coifman:2006} is that as $n \rightarrow \infty$
and $\varepsilon \rightarrow 0$, we have $\PP_{\varepsilon}^{t/\varepsilon}
\rightarrow e^{t\Delta}$, where $e^{t\Delta}$ is the Neumann heat kernel on
$\M$.  Since the eigenfunctions and eigenvalues of $e^{t\Delta}$ give a complete
geometric description of the manifold $\M$
\cite{berard:embedManifoldHeatKer1994}, the diffusion maps embedding \eqref{eqn:
original diffusion maps} learns the geometry of the manifold from the samples
$X$.

In the time-dependent case, we consider a sequence of data
$(X_1,\ldots,X_t)$ and construct a corresponding family of operators
$(\PP_1,\ldots,\PP_t)$. Rather than raising a matrix to a power, we
compose the family of operators, obtaining $\PP^{(t)}$ defined in
equation \eqref{eqn:Pt}. After defining
a distance $D^{(t)}$ based on $\PP^{(t)}$ we seek a distance preserving
embedding. However, since the product of symmetric matrices is not in
general symmetric, there is no reason we should suspect
$\PP^{(t)}$ to have a basis of real eigenvalues and eigenvectors. Therefore, we
construct a diffusion maps framework based on the Markov operator
using the singular value decomposition of the
operator $\vA^{(t)}$ as described in equations 
\eqref{eqn:PiP}, \eqref{eqn:Asvd}, \eqref{eqn:Psi}, and \eqref{eqn:tcdm} . We
expect the operator $\PP^{(t)}$
encodes some average sense of affinity across the data, and by extension 
that the time coupled diffusion map is similarly meaningful. In
the following, we make the connection of $\PP^{(t)}$ to the data precise
by showing that
$\PP^{(t)}$ approximates the heat kernel on the
underlying manifold $(\M,g(\cdot))$, which will be
defined in Section \ref{sec:manifold}. Thus, we conjecture that the
time coupled diffusion map aggregates important geometrical
information of the manifold $(\M,g(\cdot))$ over the time interval
$[0,T]$. Numerical results in Section \ref{sec:numerical} lend
credence to this conjecture. There is, however, no theoretical result
that directly links geometrical information of $(\M,g(\cdot))$ over
arbitrarily long time scales with its heat kernel. The closest results
are contained in \cite{Abdallah:2010, abdallah:embedTimeManifold2012},
in which the heat kernel of $(\M,g(\cdot))$ is used to embed the
manifold into a single Hilbert space, so that one can observe the flow
of $\M$ over time. This result, however, only holds for short time
scales.

Before describing the connection of $\PP^{(t)}$ to heat flow, we make
a brief computational note. In practice, it is common to construct
each diffusion matrix $\PP_{i}$ defined in equation
\eqref{eqn:markov4} as a sparse matrix by, for example, truncating
values which fall below a certain threshold. However, if the product
\begin{equation*}
\PP^{(t)} = \PP_t \PP_{t-1} \cdots \PP_{2} \PP_1
\end{equation*}
is computed explicitly, there is no reason to expect sparsity will be
maintained. Moreover, since each $\PP_i$ is a diffusion operator on an
assumed underlying $k$-dimensional manifold $\cM$, if each $\PP_i$
initially has $l$ nonzero entires in each row, the number of nonzero
entires in each row of the product $\PP^{(t)}$ could be on the order
of $(l \cdot t)^k$. Therefore,
rather than explicitly constructing $\PP^{(t)}$, we can treat
$\PP^{(t)}$ as an operator that can be applied to vectors in order $n \cdot l
\cdot t$ operations. Using this approach, the singular value decomposition in
equation \eqref{eqn:Asvd} can be computed by
iterative methods such as Lancroz iteration or subspace iteration
\cite{Rutishauser:1969}. When $n^2 > n \cdot l \cdot t$ this
approach may provide significant computational savings, and reduce the memory
requirements of the computation.

\section{The heat kernel for $(\M, g(\cdot))$ \label{sec:manifold}}

We begin by stating the definition and properties of the heat kernel
for a manifold with time-dependent metric. Let $g(\tau), \tau \in
[0,T]$, be a smooth family of metrics on a manifold $\M$. The heat
equation for such a manifold is: 
\begin{equation} \label{eqn: heat eqn}
\frac{\partial
u}{\partial t} = \Delta_{g(t)} u, 
\end{equation} 
where $u: \M \times [0,T] \rightarrow \R$. We say that 
\begin{equation*} 
Z : (\M \times [0,T]) \times (\M \times [0,T]) \rightarrow \R,
\end{equation*} 
is the \textit{heat kernel} for $\frac{\partial}{\partial t} - \Delta_{g(t)}$
if the following three conditions hold: 
\begin{enumerate}
\label{eq:heat kernel}
\renewcommand{\theenumi}{$(H_{\arabic{enumi}})$}
\renewcommand{\labelenumi}{\theenumi} 
\item\label{h1} $Z(x,\tau;y,\sigma)$ is $C^2$ in the spacial variables $x,y$ and
$C^1$ in the temporal variables $\tau, \sigma$, 
\item\label{h2} $\left(\frac{\partial}{\partial t} - \Delta_{g(t)}\right)
Z(\cdot, \cdot; y,\sigma) = 0$, and
\item\label{h3}  $\lim_{\tau \searrow \sigma} Z(\cdot,\tau;y,\sigma) =
\delta_y$.
\end{enumerate} 
Several researchers have studied the heat equation \eqref{eqn: heat
  eqn} and corresponding heat kernel for a manifold $(\M,
g(\cdot))$, in large part due to its relationship with the Ricci flow
\cite{Guenther:2002, Chow:2008}, but also for data analysis
\cite{Abdallah:2010}. The following properties of the heat kernel have
been established (see \cite{Guenther:2002}). First, the heat kernel
exists and is the unique positive function satisfying \ref{h1},
\ref{h2}, and \ref{h3}. Additionally, the solution to the initial
value problem,
\begin{equation}\label{eqn: initial value}
\left\{
\begin{array}{rcl}
\partial u / \partial t&=&\Delta_{g(t)} u, \\[5pt]
u(x, 0)&=&f(x),
\end{array}
\right.
\end{equation}
has the integral representation \cite[Corollary 2.2]{Guenther:2002}:
\begin{equation*}
u(x,\tau) = \int_{\M} Z(x, \tau; y, 0) f(y) \, dV (y, 0),
\end{equation*}
where $V(\tau)$ is the Riemannian volume of $\M$ at time $\tau$. The heat kernel
$Z(x,\tau;y,\sigma)$ also possesses properties analogous to those of the
standard heat kernel, such as the semigroup property,
\begin{equation} \label{eq:semigroup} 
Z(x,\tau;y,\sigma) = \int_{\M}
Z(x,\tau;\xi,\nu) Z(\xi,\nu;y,\sigma) \, dV(\xi, \nu), \quad
\forall \, x,y \in \M, \, \nu \in (\sigma,\tau).
\end{equation}
Furthermore, its ``rows'' sum to one, precisely stated as 
\begin{equation} \label{eqn: row sums one}
\int_{\M}
Z(x,\tau;y,\sigma) \, dV(y, \sigma) = 1, \quad \forall \, x \in \M, \,
\sigma < \tau \in [0,T].
\end{equation}
In the case of a static metric, the associated integral transform of
the Neumann heat kernel is known to have the asymptotic approximation
\cite{Coifman:2006}:
\begin{equation} \label{eqn:expansion}
e^{t\Delta} = \lim_{\varepsilon \rightarrow 0} \left(I + \varepsilon
\Delta\right)^{t/\varepsilon} = \left(1+\varepsilon \Delta
\right)^{t/\varepsilon} + \bigo(\varepsilon).
\end{equation}
For $t \leq T$, we define the associated integral transform of $Z$ as the operator
$T_Z^{(t)}: L^2 (\M) \rightarrow L^2(\M)$,
\begin{equation} \label{eqn: TZt def}
T_Z^{(t)}f(x) = \int_{\M} Z(x, t; y, 0) f(y) \, dV(y,0).
\end{equation}
We want to express $T_Z^{(t)}$ in an analogous form to \eqref{eqn:expansion};
we would expect it to resemble $e^{\int_0^t \Delta_{g(\tau)}d\tau}$.
However, as the family $\Delta_{g(\tau)}$ is not in general
commutative with respect to composition, it is necessary to consider
the \emph{ordered exponential} of $\Delta_{g(\tau)}$ over $[0,t]$,
which is the equivalent of the exponential for non-communicative
operators, and which is denoted $\T e^{\int_0^t
\Delta_{g(\tau)}d\tau}$. The ordered exponential can be expressed in
terms of the power series: 

\begin{align*} 
\T  e^{\int_0^t \Delta_{g(\tau)}d\tau}  &= I + \int_0^t
\Delta_{g(\tau)} \, d\tau + \int_0^t \Delta_{g(\tau)} \int_0^{\tau}
\Delta_{g(\tau_1)} \, d\tau_1 \, d\tau +  \cdots \\ 
&= I + \sum_{i=1}^\infty \int_0^t
\Delta_{g(\tau_1)} \int_0^{\tau_1} \Delta_{g(\tau_2)}
\int_0^{\tau_2}\cdots \int_0^{\tau_{i-2}} \Delta_{g(\tau_{i-1})}
\int_0^{\tau_{i-1}} \Delta_{g(\tau_i)} \, d \tau_i \, d\tau_{i-1} \cdots
d\tau_2 \, d\tau_1, 
\end{align*} 
where $I$ denotes the identity in the
appropriate space. The operator $\T$ can be thought of as enforcing
time order in the products for the standard exponential expansion in a
sense which is equivalent to the power series definition, cf.
\cite{Giscard:2014}. Considering $\T e^{\int_0^t
\Delta_{g(\tau)}d\tau}$ as a formal power series, it is easy to see
that the heat equation is satisfied. Let   $\{\phi_l\}_{l \ge 0}$ be
the eigenfunctions of $\Delta_{g(0)}$, ordered in terms of increasing
eigenvalue, and define $E_K \defeq \Span \{\phi_l : 0 \le l \le K\}$.
Then $\Delta_{g(0)}$ is bounded on $E_K$, and furthermore as $g(\tau)$
is a smooth family of metrics for $\tau \in [0,T]$, we
can conclude $\Delta_{g(\tau)}$ is uniformly bounded on $E_K$ for all
$\tau \in [0,T]$ (see \ref{appendix: uniform bound}). Hence on
$E_K$ the power series for $\T e^{\int_0^t \Delta_{g(\tau)}d\tau}$
converges and by uniqueness we conclude $\T e^{\int_0^t
\Delta_{g(\tau)}d\tau}$ is the heat kernel on $E_K$. Henceforth we
will use 
\begin{equation} \label{eqn:hk} 
T_Z^{(t)} = \T e^{\int_0^t \Delta_{g(\tau)}d\tau}
\end{equation} 
as the notation for the Neumann
heat kernel. We remark that $E_K$
could also be defined independent of the differential structure of the
manifold as $K$-band-limited functions on $\M$, cf.
\cite{Mousazadeh:2015}. We prefer the former definition as it provides
consistency with the definitions in \cite{Coifman:2006}.

Recall that $n$ is the number of spatial samples of the manifold $\M$,
and that $m$ is the number of temporal measurements on
the time interval $[0,T]$. We assume that the time interval
$[0,T]$ is divided into $m$ intervals $[\tau_{i-1},\tau_i)$ each of
length $\varepsilon$ where $\tau_0 = 0$ and $\tau_m = T$. For
simplicity, we assume that our $m$ measurements are taken at
$(\tau_1,\ldots,\tau_m)$.  Our main
result is that in the limit of large data, both spatially and
temporally, the transition operator
$\PP_{\varepsilon}^{(\lceil t/\varepsilon \rceil)}$ converges to the heat kernel:
\begin{equation*}
\PP_{\varepsilon}^{(\lceil t/\varepsilon \rceil)} \rightarrow
\T e^{\int_0^t \Delta_{g(\tau)} d\tau} \text{ as } n \rightarrow
\infty \text{ and } \varepsilon \rightarrow 0.
\end{equation*}
More precisely: 
\begin{thm} \label{thm:main} Suppose the isometric
embedding $\M_\tau \subset \R^d$ of a  time-dependent manifold $(\M,
g(\tau))$ is measured at a common set $X = \{x_j\}_{j=1}^n \subset \M$
of $n$ points at $\varepsilon$ spaced units of time over a time
interval $[0,T]$, so that, in particular, we have time samples
$(\tau_i)_{i=1}^m \subset [0,T]$ with $\tau_i = i \cdot
\varepsilon$ and $m = T/\varepsilon$. 

Then, for any sufficiently smooth function $f: \M \rightarrow \R$ and
$t \leq T$, the heat kernel $\T e^{\int_0^t \Delta_{g(\tau)} d\tau}$ can be approximated
by the operator $\PP^{(\lceil t/\varepsilon \rceil)}_{\varepsilon}$: 
\begin{equation}
\PP_{\varepsilon}^{(\lceil t/\varepsilon \rceil )}f(x_j) = \T e^{\int_0^t
\Delta_{g(\tau)} d\tau}f(x_j) + \bigo\left(\frac{1}{n^{1/2}
\varepsilon^{d/4+1/2}},\varepsilon \right), \quad x_j \in X.
\end{equation}
\end{thm}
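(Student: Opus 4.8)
The plan is to reduce Theorem~\ref{thm:main} to two essentially independent pieces: a per-factor estimate, in which each $\PP_{\varepsilon,i}$ is identified with a static density-invariant diffusion operator and expanded via the pointwise convergence theory of Coifman and Lafon \cite{Coifman:2006} (with error rates as in \cite{singer:GraphToManifold2006}), and a deterministic composition estimate, in which the resulting time-ordered product of $\lceil t/\varepsilon\rceil$ near-identity operators is compared with the ordered exponential $\T e^{\int_0^t\Delta_{g(\tau)}d\tau}$. Throughout I would fix a band-limited subspace $E_K = \Span\{\phi_l : 0 \le l \le K\}$ as in the discussion preceding the theorem, on which the operators $\Delta_{g(\tau)}$ are uniformly bounded for $\tau \in [0,T]$ (cf.\ \ref{appendix: uniform bound}) and on which $T_Z^{(t)} = \T e^{\int_0^t\Delta_{g(\tau)}d\tau}$; the passage from $f \in E_K$ to a general sufficiently smooth $f$ I would import verbatim from \cite{Coifman:2006}.

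\emph{Step 1 (one time step).} By the second remark in Section~\ref{sec:2.2}, for each fixed $i$ the set $X_i = \iota_{\tau_i}(X)$ consists of $n$ i.i.d.\ samples of the isometrically embedded static manifold $(\M,g(\tau_i)) \subset \R^d$, distributed according to $q_{\tau_i}$, and $\PP_{\varepsilon,i}$ is exactly the density-invariant diffusion operator \eqref{eqn:markov1}--\eqref{eqn:markov4} built from this sample. The Coifman--Lafon expansion then gives, for $f \in E_K$,
\[
\PP_{\varepsilon,i} f(x_j) = f(x_j) + \varepsilon\,\Delta_{g(\tau_i)}f(x_j) + \bigo(\varepsilon^2) + \bigo\!\left(\frac{1}{n^{1/2}\varepsilon^{d/4 - 1/2}}\right), \qquad x_j \in X,
\]
the $\bigo(\varepsilon^2)$ being the deterministic bias and the last term the Monte--Carlo error of estimating the kernel sums from $n$ points at bandwidth $\sqrt\varepsilon$; the lower bound assumed on the sampling densities and the cancellation in $f(x_l) - f(x_j)$ are what yield the exponent $d/4 - 1/2$. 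Since $g(\cdot)$ is smooth on the compact interval $[0,T]$, all implied constants may be taken uniform in $i$. Equivalently, compressed to $E_K$ one has $\PP_{\varepsilon,i} = I + \varepsilon\,\Delta_{g(\tau_i)} + R_i$ with $\|R_i\| = \bigo(\varepsilon^2) + \bigo(n^{-1/2}\varepsilon^{-d/4 + 1/2})$ and $\|\PP_{\varepsilon,i}\| \le 1 + C\varepsilon$, $C = C(K,T)$; the same bound $\|I + \varepsilon\Delta_{g(\tau_i)}\| \le 1 + C\varepsilon$ holds for the clean factors.

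\emph{Step 2 (composition).} Set $m' = \lceil t/\varepsilon\rceil \le m$. A telescoping identity writes $\PP_{\varepsilon,m'}\cdots\PP_{\varepsilon,1} - (I+\varepsilon\Delta_{g(\tau_{m'})})\cdots(I+\varepsilon\Delta_{g(\tau_1)})$ as a sum of $m'$ products in which exactly one factor has been swapped for the corresponding $R_i$; bounding the remaining factors by $1 + C\varepsilon$, this has norm at most $m'(1+C\varepsilon)^{m'-1}\max_i\|R_i\| \le e^{Ct}\cdot\tfrac{t}{\varepsilon}\cdot\bigl(\bigo(\varepsilon^2) + \bigo(n^{-1/2}\varepsilon^{-d/4+1/2})\bigr) = \bigo(\varepsilon) + \bigo(n^{-1/2}\varepsilon^{-d/4-1/2})$. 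It remains to compare the clean product with $\T e^{\int_0^t\Delta_{g(\tau)}d\tau}$: expanding $\prod_{i=1}^{m'}(I + \varepsilon\Delta_{g(\tau_i)})$ and collecting the terms with exactly $j$ factors gives $\varepsilon^j\sum_{i_1 > \cdots > i_j}\Delta_{g(\tau_{i_1})}\cdots\Delta_{g(\tau_{i_j})}$, which is precisely the rectangle-rule Riemann sum, on the uniform mesh $\tau_i = i\varepsilon$, for the $j$-th iterated integral in the power series defining $\T e^{\int_0^t\Delta_{g(\tau)}d\tau}$; since $\tau\mapsto\Delta_{g(\tau)}$ is smooth and uniformly bounded on $E_K$, the $j$-th term is reproduced to error $\bigo(\varepsilon)$ with a constant decaying geometrically in $j$, so summing over $j$ the discrepancy is $\bigo(\varepsilon)$. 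Replacing $m'$ by $t/\varepsilon$ changes the product by at most one more near-identity factor, again $\bigo(\varepsilon)$. Combining the three estimates and recalling $T_Z^{(t)} = \T e^{\int_0^t\Delta_{g(\tau)}d\tau}$ on $E_K$ gives the stated bound $\bigo(n^{-1/2}\varepsilon^{-d/4-1/2},\varepsilon)$.

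\emph{Main obstacle.} The delicate part is the uniformity required to iterate a \emph{sample-dependent, pointwise} approximation $t/\varepsilon$ times. Two points need care. First, the sample sets $X_1,\ldots,X_{m'}$ are all deterministic images $\iota_{\tau_i}\circ\iota_{\tau_1}^{-1}(X_1)$ of the single random sample $X_1$, hence their variance errors are strongly dependent; no cancellation across time steps is available and one must pay the full union bound of $m' = t/\varepsilon$ terms --- exactly what converts the single-step exponent $d/4 - 1/2$ into the stated $d/4 + 1/2$. Second, making the telescoping and Riemann-sum estimates rigorous requires keeping $\Delta_{g(\tau)}$, the errors $R_i$, and the operator norms all controlled on one finite-dimensional space independent of $i$; this is the reason for restricting to (and compressing onto) $E_K$, and for leaving the $K\to\infty$ balancing against the $K$-dependence of $C$ to the smooth-function limit of \cite{Coifman:2006}. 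Once that finite-dimensional uniform framework is in place, the telescoping bound, the geometric convergence of the power series, and the quadrature estimate are all routine.
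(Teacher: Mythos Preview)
Your proof is correct and follows the same overall architecture as the paper's: a per-factor expansion $\PP_{\varepsilon,i} = I + \varepsilon\Delta_{g(\tau_i)} + R_i$ on $E_K$ via Singer/Coifman--Lafon, followed by a composition estimate that compares the $\ell$-fold product first with the clean product $\prod_i(I+\varepsilon\Delta_i)$ and then with the ordered exponential, all at the cost of one factor of $\varepsilon^{-1}$ in the error.

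The one genuine methodological difference is in how you link the clean product to $\T e^{\int_0^t\Delta_{g(\tau)}d\tau}$. The paper does this \emph{top-down}: it invokes the semigroup property \eqref{eq:semigroup} to factor the heat kernel as $\prod_i \T e^{\int_{\tau_{i-1}}^{\tau_i}\Delta_{g(\tau)}d\tau}$, expands each short-time factor as $I+\varepsilon\Delta_{g(\tau_{i-1})}+\bigo(\varepsilon^2)$ (Lemma~\ref{prop: heat kernel 1st order expansion}), and then controls the accumulated $\bigo(\varepsilon^2)$ errors combinatorially (Lemma~\ref{prop:approximation}). You instead work \emph{bottom-up}: expand $\prod_i(I+\varepsilon\Delta_i)$ into graded pieces and recognise the degree-$j$ term as a rectangle-rule quadrature for the $j$-th iterated integral in the Dyson series. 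Your route is slightly more direct and makes the Riemann-sum structure explicit, at the price of needing to argue that the per-term quadrature errors decay fast enough in $j$ to sum; the paper's route avoids that summation by leaning on the already-established semigroup property of $Z$, which keeps the analysis at the level of two-term expansions throughout. Either way the bookkeeping is the same, and your telescoping bound for swapping in the $R_i$'s is in fact more carefully stated than the paper's one-line appeal to ``the proof of Lemma~\ref{prop:approximation}.''
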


We prove Theorem \ref{thm:main} in the following.
Recall that $E_K$ is defined to be the span for the first $K+1$
eigenfunctions of
the Laplace-Beltrami operator $\Delta_{g(0)}$. 

\begin{lem} \label{prop: heat kernel 1st order expansion} 
On $E_K$, the heat kernel $\T e^{\int_{\sigma}^{\sigma + \varepsilon}
  \Delta_{g(\tau)} d\tau}$ admits the
following asymptotic expansion:
\begin{equation} \label{eqn:prop1}
\T e^{\int_{\sigma}^{\sigma + \varepsilon} \Delta_{g(\tau)} d\tau} = I +
\varepsilon \cdot \Delta_{g(\sigma)} + \bigo(\varepsilon^2).
\end{equation}
\end{lem}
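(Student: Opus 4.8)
The plan is to work directly with the power series for the ordered exponential on the fixed finite-dimensional space $E_K$, where, as noted in the discussion preceding the statement, the series converges and represents the heat kernel, and to carry out all estimates in operator norm on $E_K$. Writing $\sigma' = \sigma + \varepsilon$, the series reads
\[
\T e^{\int_\sigma^{\sigma'}\Delta_{g(\tau)}d\tau} = I + \int_\sigma^{\sigma'}\Delta_{g(\tau_1)}\,d\tau_1 + \sum_{i \ge 2}\int_\sigma^{\sigma'}\Delta_{g(\tau_1)}\int_\sigma^{\tau_1}\Delta_{g(\tau_2)}\cdots\int_\sigma^{\tau_{i-1}}\Delta_{g(\tau_i)}\,d\tau_i\cdots d\tau_1,
\]
so the proof splits into controlling the tail $\sum_{i\ge2}$ and extracting the linear term from the $i=1$ integral.

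First I would dispatch the tail. By the uniform bound established in \ref{appendix: uniform bound}, there is a constant $C = C(K,T)$ with $\|\Delta_{g(\tau)}\|_{E_K\to E_K}\le C$ for all $\tau \in [0,T]$. The $i$-th term of the series is an integral over the simplex $\{\sigma \le \tau_i \le \cdots \le \tau_1 \le \sigma'\}$, which has volume $\varepsilon^i/i!$, of a composition of $i$ operators each of norm at most $C$; hence its operator norm on $E_K$ is at most $C^i\varepsilon^i/i!$, and
\[
\Big\|\sum_{i\ge 2}\!\int_\sigma^{\sigma'}\!\Delta_{g(\tau_1)}\!\cdots\!\int_\sigma^{\tau_{i-1}}\!\Delta_{g(\tau_i)}\,d\tau_i\cdots d\tau_1\Big\|_{E_K\to E_K} \le \sum_{i\ge2}\frac{(C\varepsilon)^i}{i!} = e^{C\varepsilon}-1-C\varepsilon = \bigo(\varepsilon^2),
\]
with the implied constant depending only on $C$, hence only on $K$ and $T$, and in particular uniform in $\sigma$.

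Second I would extract the linear term from $\int_\sigma^{\sigma'}\Delta_{g(\tau_1)}\,d\tau_1$ via a first-order Taylor split:
\[
\int_\sigma^{\sigma+\varepsilon}\Delta_{g(\tau_1)}\,d\tau_1 = \varepsilon\,\Delta_{g(\sigma)} + \int_\sigma^{\sigma+\varepsilon}\big(\Delta_{g(\tau_1)} - \Delta_{g(\sigma)}\big)\,d\tau_1.
\]
Since $\tau \mapsto g(\tau)$ is a smooth family of metrics on the compact manifold $\M$, the map $\tau \mapsto \Delta_{g(\tau)}|_{E_K}$ is Lipschitz in operator norm on the compact interval $[0,T]$: writing $\Delta_{g(\tau)}$ in a fixed finite atlas, its coefficients and their $\tau$-derivatives are bounded uniformly, and on the fixed finite-dimensional subspace $E_K$ of smooth functions the relevant Sobolev and $L^2$ norms are equivalent (this is the same local-coordinate computation underlying the uniform bound). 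Hence $\|\Delta_{g(\tau_1)} - \Delta_{g(\sigma)}\|_{E_K\to E_K}\le L\varepsilon$ for $\tau_1 \in [\sigma,\sigma+\varepsilon]$, so the remainder integral has operator norm at most $L\varepsilon^2 = \bigo(\varepsilon^2)$. Combining this with the tail estimate yields \eqref{eqn:prop1}, uniformly in $\sigma$.

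The main obstacle is purely the input from the appendix — the uniform operator bound for $\Delta_{g(\tau)}$ on $E_K$ together with its Lipschitz dependence on $\tau$. Given compactness of $\M$ and of $[0,T]$ and smoothness of $g(\cdot)$, this is a routine but slightly technical local-coordinates argument rather than a conceptual difficulty; everything else in the proof (the simplex volume computation, the geometric-series tail, the first-order Taylor split) is elementary. It is worth recording that the $\bigo(\varepsilon^2)$ is uniform in $\sigma$, since Theorem \ref{thm:main} will compose $m = T/\varepsilon$ factors of this form.
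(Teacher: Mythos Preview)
Your proof is correct and follows essentially the same approach as the paper: bound the tail of the ordered-exponential series starting at the $i=2$ term by $\bigo(\varepsilon^2)$ using the uniform operator bound on $E_K$, then split the $i=1$ integral as $\varepsilon\,\Delta_{g(\sigma)}$ plus a remainder controlled by the Lipschitz dependence of $\tau\mapsto\Delta_{g(\tau)}$. You are in fact somewhat more explicit than the paper about the tail bound (the simplex-volume computation and the closed form $e^{C\varepsilon}-1-C\varepsilon$), and your observation that the $\bigo(\varepsilon^2)$ is uniform in $\sigma$ is exactly what is needed downstream in Lemma~\ref{prop:approximation}.
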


\begin{proof} 
By definition 
\[
\T e^{\int_{\sigma}^{\sigma + \varepsilon} \Delta_{g(\tau)}d\tau} = \]
\[ I + \int_{\sigma}^{\sigma + \varepsilon} \Delta_{g(\tau)} d\tau +
\int_{\sigma}^{\sigma + \varepsilon} \Delta_{g(\tau)}
\int_{\sigma}^{\tau}  \Delta_{g({\tau_1})} d\tau_1 d\tau +
\int_{\sigma}^{\sigma + \varepsilon} \Delta_{g(\tau)}  \int_{\sigma}^{\tau}  \Delta_{g({\tau_1})}
\int_{\sigma}^{\tau_1}  \Delta_{g({\tau_2})} d\tau_2 d\tau_1 d\tau + \cdots
\]
The tail of this series starting with the third
term is $\bigo(\varepsilon^2)$. For the second term, we can separate the
contribution of $\Delta_{g(\sigma)}$ from the integral yielding:
\begin{equation*} 
\T e^{\int_{\sigma}^{\sigma + \varepsilon} \Delta_{g(\tau)}d\tau}  = I + \varepsilon
\cdot \Delta_{g(\sigma)} + \int_{\sigma}^{\sigma + \varepsilon}
(\Delta_{g(\tau)}-\Delta_{g(\sigma)}) d\tau + \bigo(\varepsilon^2).
\end{equation*}
Now we can bound the integral by using the smoothness of
$\Delta_{g(\tau)}$, which results from the smoothness of $g(\tau)$. By
the smoothness of $\Delta_{g(\tau)}$ with respect to time and the fact
that it is uniformly bounded on $E_K$ for all $\tau \in [0,T]$,
\begin{equation*}
\left\|\Delta_{g(\tau)}-\Delta_{g(\tau^\prime)}\right\| \le C
|\tau-\tau^\prime|, \quad C \in \R^+.
\end{equation*}
Therefore,
\begin{equation*} \left| \int_{\sigma}^{\sigma + \varepsilon}
    (\Delta_{g(\tau)}-\Delta_{g(\sigma)}) d\tau \right| \le
  \varepsilon \cdot \sup_{\tau \in [\sigma, \sigma + \varepsilon]}
  \left\| \Delta_{g(\tau)}-\Delta_{g(\sigma)} \right\| \le C \varepsilon^2,
\end{equation*}
and hence we have:
\begin{equation*} 
\T e^{\int_{\sigma}^{\sigma + \varepsilon} \Delta_{g(\tau)}
d\tau}  = I + \varepsilon \cdot \Delta_{g(\sigma)} + \bigo(\varepsilon^2),
\end{equation*}
as desired.
\end{proof}

By using the short time expansion from \eqref{eqn:prop1}, we can develop a
long time asymptotic expansion of the heat kernel. In the following,
we write $\Delta_i=\Delta_{g(\tau_i)}$ to simplify notation.

\begin{lem} \label{prop:approximation} Let
$(\tau_0,\tau_1,\ldots,\tau_m)$ be a uniform partition of $[0,T]$ with
step size $\varepsilon > 0$, so that $\tau_i = \varepsilon \cdot i$
and $\tau_m = m \cdot \varepsilon = T$. For $0 < t \leq T$, define $\ell
\defeq \min_i \{i = 1, \ldots, m \mid \tau_i \geq t \}$. Then on $E_K$ the
heat kernel $\T e^{\int_0^t \Delta_{g(\tau)} d\tau}$ has the expansion
\begin{equation} \label{eq:heat kernel expansion} 
\T e^{\int_0^t \Delta_{g(\tau)} d\tau}= \left(I+\varepsilon \Delta_{\ell} \right)
\left(I+\varepsilon \Delta_{\ell-1} \right) \cdots \left( I+\varepsilon
\Delta_{1} \right) + \bigo(\varepsilon).
\end{equation} 
\end{lem}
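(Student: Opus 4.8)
The plan is to convert the long-time expansion into a product of the short-time expansions from Lemma~\ref{prop: heat kernel 1st order expansion}, using the semigroup property of the heat kernel to slice $[0,t]$ at the grid points $\tau_1,\dots,\tau_{\ell-1}$. First I would iterate the semigroup identity \eqref{eq:semigroup} to obtain, as an operator identity on $E_K$,
\[
\T e^{\int_0^t \Delta_{g(\tau)}\,d\tau}
= \T e^{\int_{\tau_{\ell-1}}^{t}\Delta_{g(\tau)}\,d\tau}\circ
\T e^{\int_{\tau_{\ell-2}}^{\tau_{\ell-1}}\Delta_{g(\tau)}\,d\tau}\circ\cdots\circ
\T e^{\int_{0}^{\tau_1}\Delta_{g(\tau)}\,d\tau}.
\]
For each of the first $\ell-1$ factors, which runs over a full interval $[\tau_{i-1},\tau_i]$ of length $\varepsilon$, Lemma~\ref{prop: heat kernel 1st order expansion} with $\sigma=\tau_{i-1}$ gives $I+\varepsilon\Delta_{i-1}+\bigo(\varepsilon^2)$; since $\|\Delta_i-\Delta_{i-1}\|\le C\varepsilon$ on $E_K$ by the Lipschitz-in-time bound used in that lemma, this equals $I+\varepsilon\Delta_i+\bigo(\varepsilon^2)$, matching the index convention of \eqref{eq:heat kernel expansion}. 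For the last factor, if $t=\tau_\ell$ it is also a full step and treated identically; if $\tau_{\ell-1}<t<\tau_\ell$, the argument of Lemma~\ref{prop: heat kernel 1st order expansion} applied to $[\tau_{\ell-1},t]$ yields $I+(t-\tau_{\ell-1})\Delta_{\ell-1}+\bigo(\varepsilon^2)$, and since $0<t-\tau_{\ell-1}\le\varepsilon$ and $\Delta_{g(\tau)}$ is uniformly bounded on $E_K$, this is $I+\varepsilon\Delta_\ell+\bigo(\varepsilon)$.

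It then remains to multiply these $\ell$ factors. Writing $A_i=I+\varepsilon\Delta_i$ and each factor as $A_i+R_i$ with $\|R_i\|=\bigo(\varepsilon^2)$ for $i<\ell$ and $\|R_\ell\|=\bigo(\varepsilon)$, I would use the telescoping identity
\[
(A_\ell+R_\ell)\cdots(A_1+R_1)-A_\ell\cdots A_1
=\sum_{k=1}^{\ell}(A_\ell+R_\ell)\cdots(A_{k+1}+R_{k+1})\,R_k\,A_{k-1}\cdots A_1 .
\]
Each partial product appearing here has operator norm at most $(1+C\varepsilon)^{\ell}\le e^{C(T+\varepsilon)}$, a bound independent of $\varepsilon$, because $\|A_i\|\le 1+\varepsilon\|\Delta_i\|\le 1+C\varepsilon$ on $E_K$ (the uniform bound of \ref{appendix: uniform bound}) and $\ell\varepsilon\le T+\varepsilon$. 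Consequently the sum is bounded by $e^{2C(T+\varepsilon)}\big((\ell-1)\,\bigo(\varepsilon^2)+\bigo(\varepsilon)\big)$, and since $\ell-1\le T/\varepsilon$ this is $\bigo(\varepsilon)$. Combining with the factorization gives $\T e^{\int_0^t\Delta_{g(\tau)}\,d\tau}=A_\ell\cdots A_1+\bigo(\varepsilon)$, which is \eqref{eq:heat kernel expansion}.

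The main obstacle is the control of accumulated error: the factorization creates $\ell=\lceil t/\varepsilon\rceil$ factors, each contributing an $\bigo(\varepsilon^2)$ error, and one must verify that these compound only to $\ell\cdot\bigo(\varepsilon^2)=\bigo(\varepsilon)$ rather than blowing up. This is exactly what the uniform-in-$\tau$ operator bound for $\Delta_{g(\tau)}$ on $E_K$ buys us: it keeps every partial product of the $A_i$'s bounded in norm uniformly in $\varepsilon$, so the telescoping estimate closes. Some care is needed with the underlying function spaces in this step, since $\Delta_{g(\tau)}$ for $\tau>0$ does not preserve $E_K$; one tracks the operators acting on the smooth functions obtained by restricting to $E_K$ and uses the uniform boundedness to control the $L^2$-norms of all intermediate vectors. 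The remaining points are routine: the index shift $\Delta_{i-1}\mapsto\Delta_i$ within each factor (absorbed by the Lipschitz estimate for $\tau\mapsto\Delta_{g(\tau)}$) and the handling of the final sub-interval $[\tau_{\ell-1},t]$ when $t$ is not a grid point (a direct $\bigo(\varepsilon)$ estimate, since that interval has length at most $\varepsilon$).
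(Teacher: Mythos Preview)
Your proof is correct and follows the same strategy as the paper: factor the heat kernel via the semigroup property \eqref{eq:semigroup}, apply Lemma~\ref{prop: heat kernel 1st order expansion} to each short-time factor, and then control the accumulated error over the $\ell=\bigo(1/\varepsilon)$ factors. The execution differs only in bookkeeping. For the error accumulation, the paper expands $\prod_i(A_i+R_i)$ binomially and bounds the remainder as $\sum_{k\ge 1}\binom{\ell}{k}\bigo(\varepsilon^{2k})=\bigo(\varepsilon)$, invoking $\|I+\varepsilon\Delta_i\|\le 1$ for small $\varepsilon$; your telescoping identity reaches the same conclusion and is slightly more robust since it only needs $\|A_i\|\le 1+C\varepsilon$. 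For $t$ not a grid point, the paper first proves the case $t=\tau_\ell$ and then shows $\|\T e^{\int_0^{\tau_\ell}\Delta_{g(\tau)}d\tau}-\T e^{\int_0^{t}\Delta_{g(\tau)}d\tau}\|=\bigo(\varepsilon)$ separately, whereas you absorb the partial interval $[\tau_{\ell-1},t]$ into the last factor from the start; either way works. Your explicit handling of the index shift $\Delta_{i-1}\to\Delta_i$ via the Lipschitz-in-$\tau$ bound is a detail the paper leaves implicit.
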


\begin{proof} 
First suppose $t = \tau_{\ell}$. Using
the semigroup property \eqref{eq:semigroup} of the heat kernel, we have: 
\begin{equation} \label{eqn: apply semigroup}
\T e^{\int_0^t \Delta_{g(\tau)} d\tau} = \T
e^{\int_{\tau_{\ell-1}}^{\tau_{\ell}} \Delta_{g(\tau)} d\tau} \cdot \T
e^{\int_{\tau_{\ell-2}}^{\tau_{\ell-1}} \Delta_{g(\tau)} d\tau} \cdots \T
e^{\int_{\tau_0}^{\tau_1} \Delta_{g(\tau)} d\tau}.  
\end{equation}
Combining the expansion \eqref{eqn: apply semigroup} and Lemma
\ref{prop: heat kernel 1st order expansion}, yields the following:
\begin{align*}
\T e^{\int_0^t \Delta_{g(\tau)} d\tau} &= \left(I+\varepsilon \Delta_{\ell} +\bigo(\varepsilon^2)\right)
\left(I+\varepsilon \Delta_{\ell-1} + \bigo(\varepsilon^2) \right) \cdots
\left( I-\varepsilon \Delta_{1} + \bigo(\varepsilon^2)\right), \\ 
&= \left(I+\varepsilon \Delta_{\ell} \right) \left(I+\varepsilon
\Delta_{\ell-1}  \right) \cdots \left( I+\varepsilon \Delta_{1}
  \right) + \Xi.  
\end{align*} 
We now analyze the error term $\Xi$. First note
that for small enough $\varepsilon$ we have: 
\begin{equation*}
\left\| I+\varepsilon \Delta_{i} \right\| \le 1, \quad \text{for all } i = 1,
\ldots, m. 
\end{equation*}
Furthermore, since $[0,T]$ is fixed, the
number of temporal measurements $m$ is $m = T/\varepsilon = \bigo(1/\varepsilon)$.
Putting together these two facts, one obtains: 
\begin{equation*}
\Xi = \sum_{k=1}^{\ell} \binom{\ell}{k} \bigo(\varepsilon^{2k}) \le \sum_{k=1}^{\ell}
\frac{\ell^k}{k!} \bigo(\varepsilon^{2k}) =  \sum_{k=1}^{\ell} \frac{1}{k!}
\bigo(\varepsilon^k) = \bigo(\varepsilon),
\end{equation*}
which proves \eqref{eq:heat kernel expansion} for $t = \tau_{\ell}$. 

Now let $\tau_{\ell} > t$. The previous argument shows:
\begin{align*}
\T e^{\int_0^{\tau_{\ell}} \Delta_{g(\tau)} d\tau} &= \left(I+\varepsilon \Delta_{g(\tau_{\ell})} \right)
\left(I+\varepsilon \Delta_{\ell-1} \right) \cdots \left( I+\varepsilon
\Delta_{1} \right) + \bigo(\varepsilon), \\
\T e^{\int_0^t \Delta_{g(\tau)} d\tau} &= \left(I+\varepsilon \Delta_{g(t)} \right)
\left(I+\varepsilon \Delta_{\ell-1} \right) \cdots \left( I+\varepsilon
\Delta_{1} \right) + \bigo(\varepsilon).
\end{align*}
Therefore the difference between the two operators is:
\begin{equation*}
\| \T e^{\int_0^{\tau_{\ell}} \Delta_{g(\tau)} d\tau} - \T e^{\int_0^t
  \Delta_{g(\tau)} d\tau} \| \leq \varepsilon \|
\Delta_{g(\tau_{\ell})} - \Delta_{g(t)} \| + \bigo(\varepsilon) \leq C
\varepsilon^2 + \bigo(\varepsilon) = \bigo(\varepsilon),
\end{equation*}
and so \eqref{eq:heat kernel expansion} holds for any $0 < t \leq T$.

\end{proof}

\begin{proof}[Proof of Theorem \ref{thm:main}] Recall that each of our
diffusion operators $\PP_{\varepsilon,i}$ for a fixed $i$ is defined the
same as in the standard diffusion maps framework. Hence, by adapting
the convergence result from Singer \cite{Singer:2006}, we have that on
$E_K$:
\begin{equation*} \PP_{\varepsilon,i} = I + \varepsilon \Delta_i
+ \bigo\left(\frac{1}{n^{1/2} \varepsilon^{d/4-1/2}},\varepsilon^2
\right).  \end{equation*} 
Let $\ell = \lceil t/\varepsilon \rceil$ and recall our $\ell$-step transition
operator $\PP_{\varepsilon}^{(\ell)}$ is defined as $\PP_\varepsilon^{(\ell)}
\defeq  \PP_{\varepsilon,\ell} \PP_{\varepsilon,\ell-1} \cdots
\PP_{\varepsilon,1}.$ The proof of Lemma \ref{prop:approximation} demonstrates that
the error of the product increases by a factor of $\varepsilon$, hence:
\begin{equation*}
\PP^{(\ell)}_{\varepsilon} = \left(I+\varepsilon \Delta_{\ell} \right)
\left(I+\varepsilon \Delta_{\ell - 1} \right) \cdots \left( I+\varepsilon
\Delta_{1} \right) + \mathcal{O} \left(\frac{1}{n^{1/2} \varepsilon^{d/4+1/2}},\varepsilon \right).
\end{equation*}
Therefore using Lemma \ref{prop:approximation} we conclude that
\begin{equation*} 
\PP_{\varepsilon}^{(\lceil t/\varepsilon \rceil)} = \T e^{\int_0^t
\Delta_{g(\tau)} d\tau}  + \mathcal{O} \left(\frac{1}{n^{1/2}
\varepsilon^{d/4+1/2}},\varepsilon \right),
\end{equation*}
on the set $E_K$, where $K$ is fixed but arbitrary. The result follows by taking
the union and closure $\overline{\bigcup_{K > 0} E_K}$, as in \cite{Coifman:2006}.
\end{proof}

\begin{rmk*} The connection to heat diffusion can be also considered in terms of
infinitesimal generators.  For the static case, the product
$\PP_\epsilon^{t/\epsilon}$ approaches a continuous-time Markov operator $P_t$,
which can be represented in terms of a generator $G$ as $P_t = e^{t G}$ where
$G = \Delta$. In the above, we have shown that the $\lceil t/\varepsilon
\rceil$-step transition operator for our inhomogeneous Markov chain
$\PP^{(\lceil t/\varepsilon \rceil)}_\epsilon$ approaches  the continuous
process $\T e^{\int_0^t \Delta_{g(\tau)} d\tau}$, whose time-dependent
infinitesimal generator at time $t$ is $\Delta_{g(t)}$.  
\end{rmk*} 
\section{Numerical Example} \label{sec:numerical} Suppose that $D$ consists of
$n = 10,000$ points sampled uniformly at random from the unit disc in $\bR^2$.
Define the maps $h,v : \R^2 \rightarrow \R^2$ by 
\[ h(x,y) = \big(x,y \cdot(1-\cos \pi x)\big), \quad \text{and} \quad v(x,y) =
\big(x \cdot (1-\cos \pi y),y\big).  \]
We refer the images $h(D)$ and $v(D)$ as horizontal and vertical barbells,
respectively. In the following, we define a deformation of the $n = 10,000$
points from a horizontal to a vertical barbell over $m=9$ times.  We define $X_1
= h(D)$, $X_5 = D$, and $X_9 = v(D)$. The intermediate sets $X_2$, $X_3$, $X_4$
and $X_6$, $X_7$, $X_8$ are defined by linear interpolation. Specifically, if
$X_i = \left\{x_j^{(i)} \right\}_{j=1}^n$, then
$$
x_j^{(i)} = \left( x_j^{(5)} - x_j^{(1)}\right) \cdot \frac{i-1}{4} + x_j^{(1)}
\quad \text{and} \quad
x_j^{(i)} = \left( x_j^{(9)} - x_j^{(5)}\right) \cdot \frac{i-5}{4} +
x_j^{(5)},
$$
for $i = 2,3,4$ and $i=6,7,8$, respectively.  In the first row of Figure
\ref{fig:barbell} the data $X_i$ is plotted for $i=1,5,9$.
\begin{figure}[h!]  
\centering 
\begin{tabular}{c|c|c|c} & $i=1$
& $i=5$  & $i=9$ \\ & & & \\ \hline \rotatebox{90}{\quad \quad \,\,
Data} & \includegraphics[width=.29\textwidth]{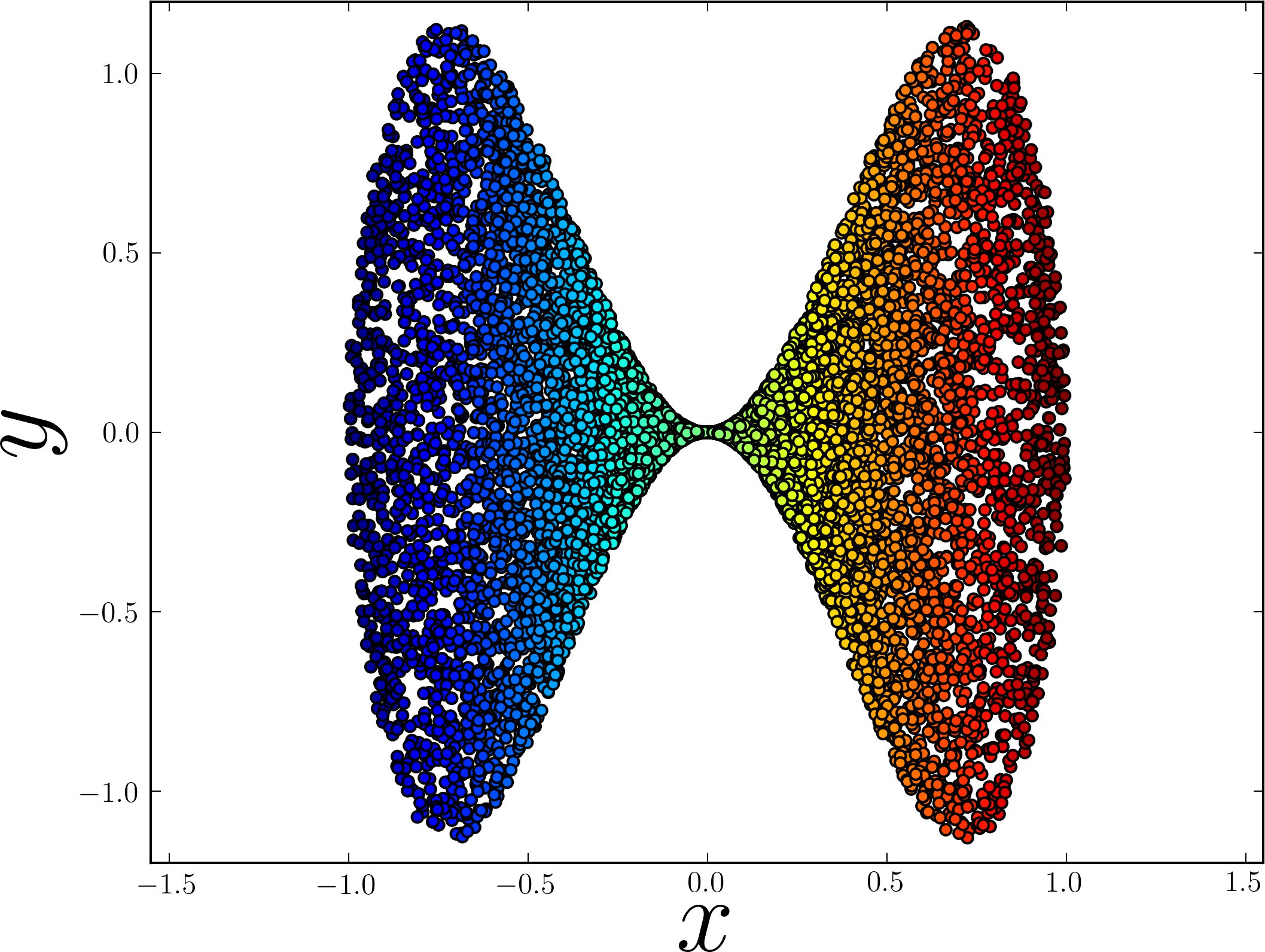} &
\includegraphics[width=.29\textwidth]{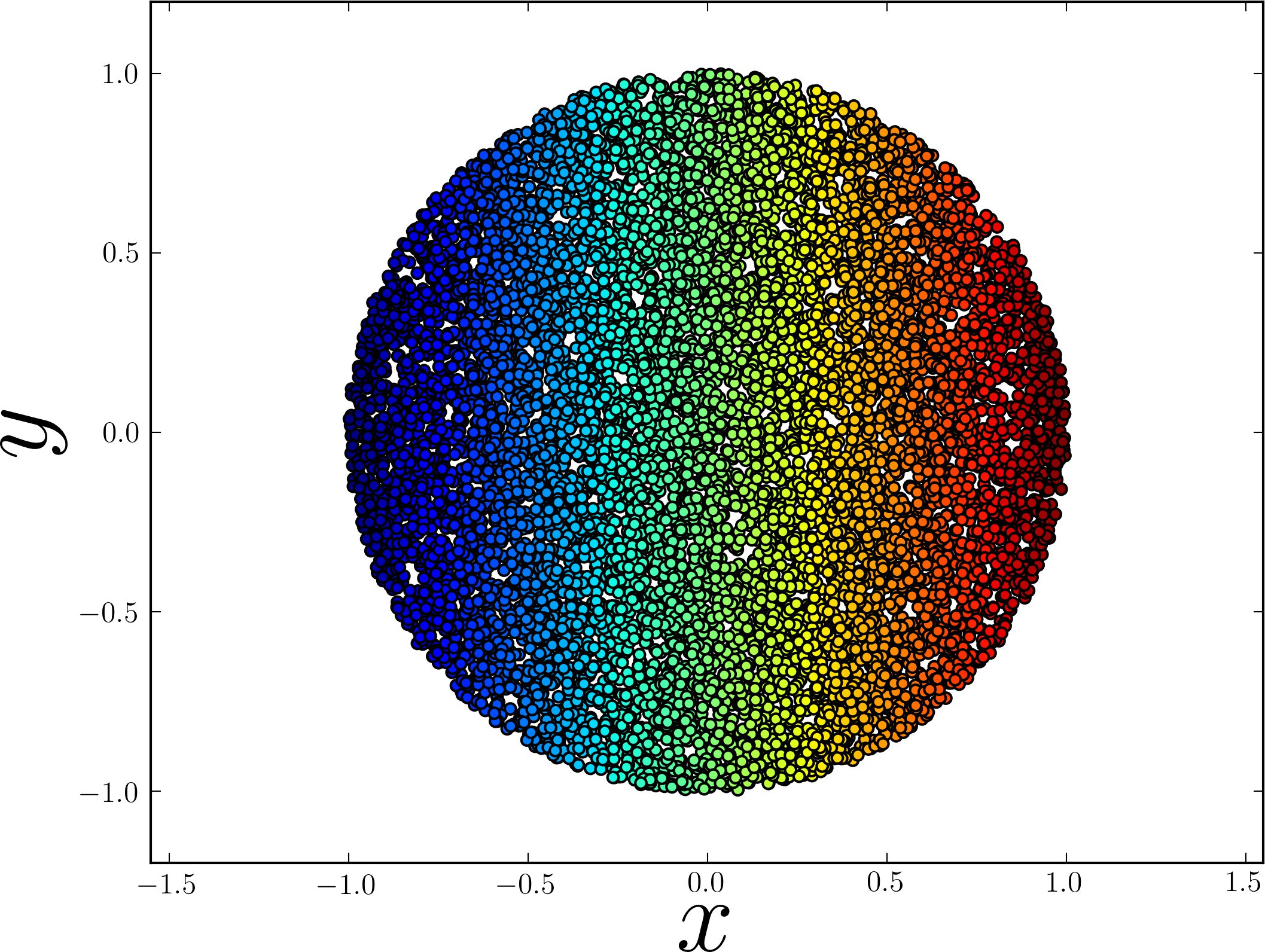} &
\includegraphics[width=.29\textwidth]{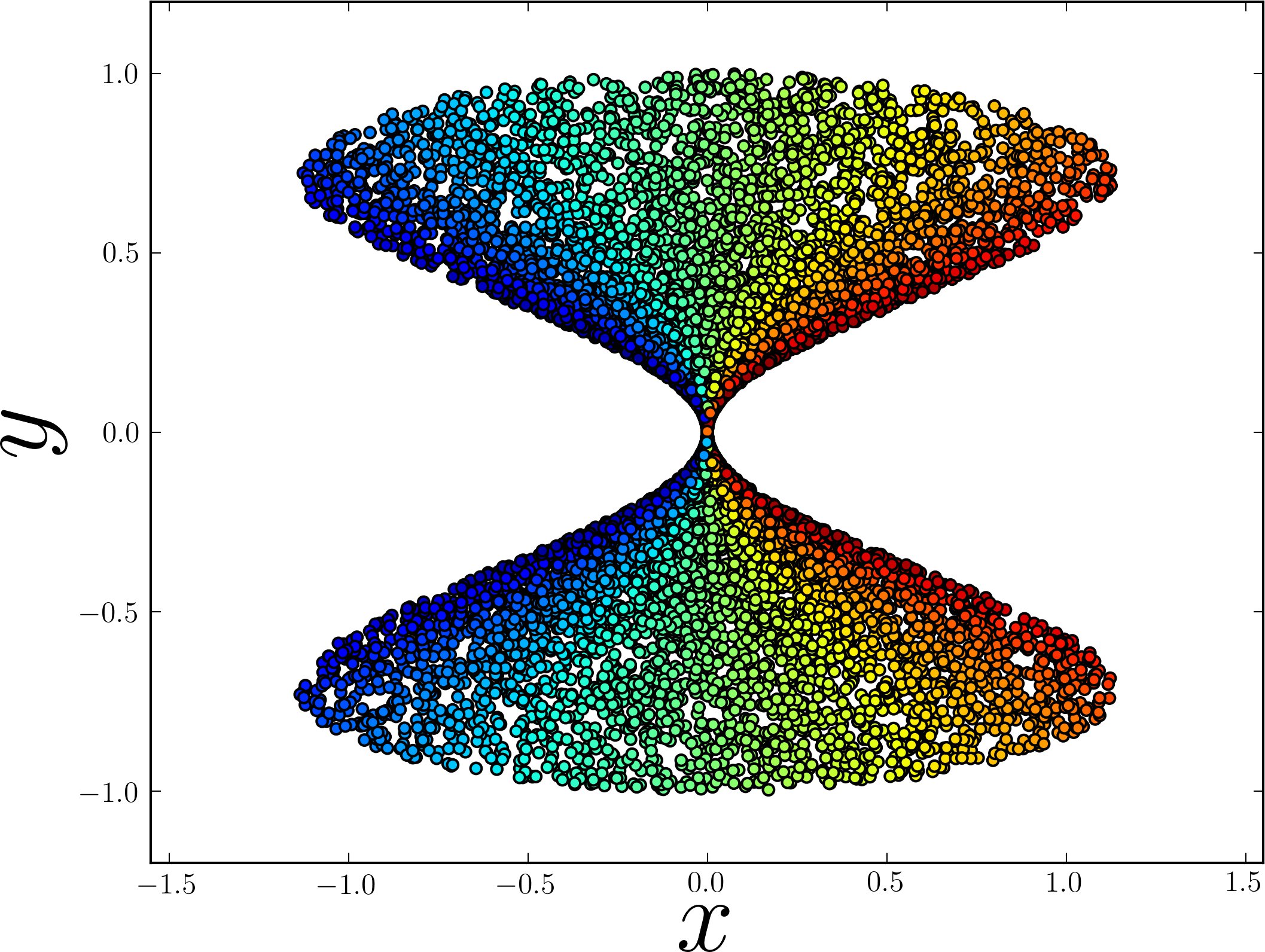} \\ \hline
\rotatebox{90}{\,\,\, Diffusion map} &
\includegraphics[width=.29\textwidth]{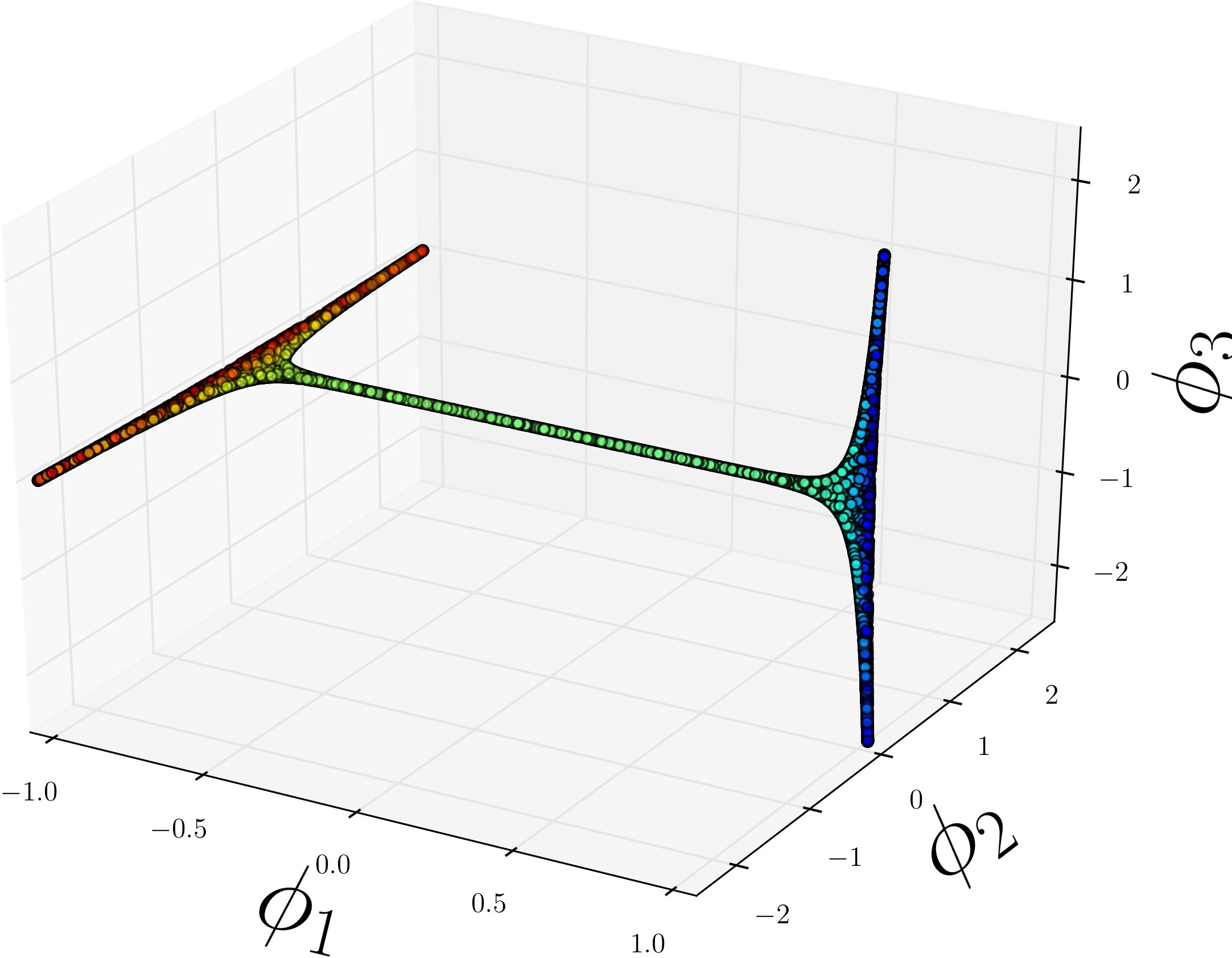} &
\includegraphics[width=.29\textwidth]{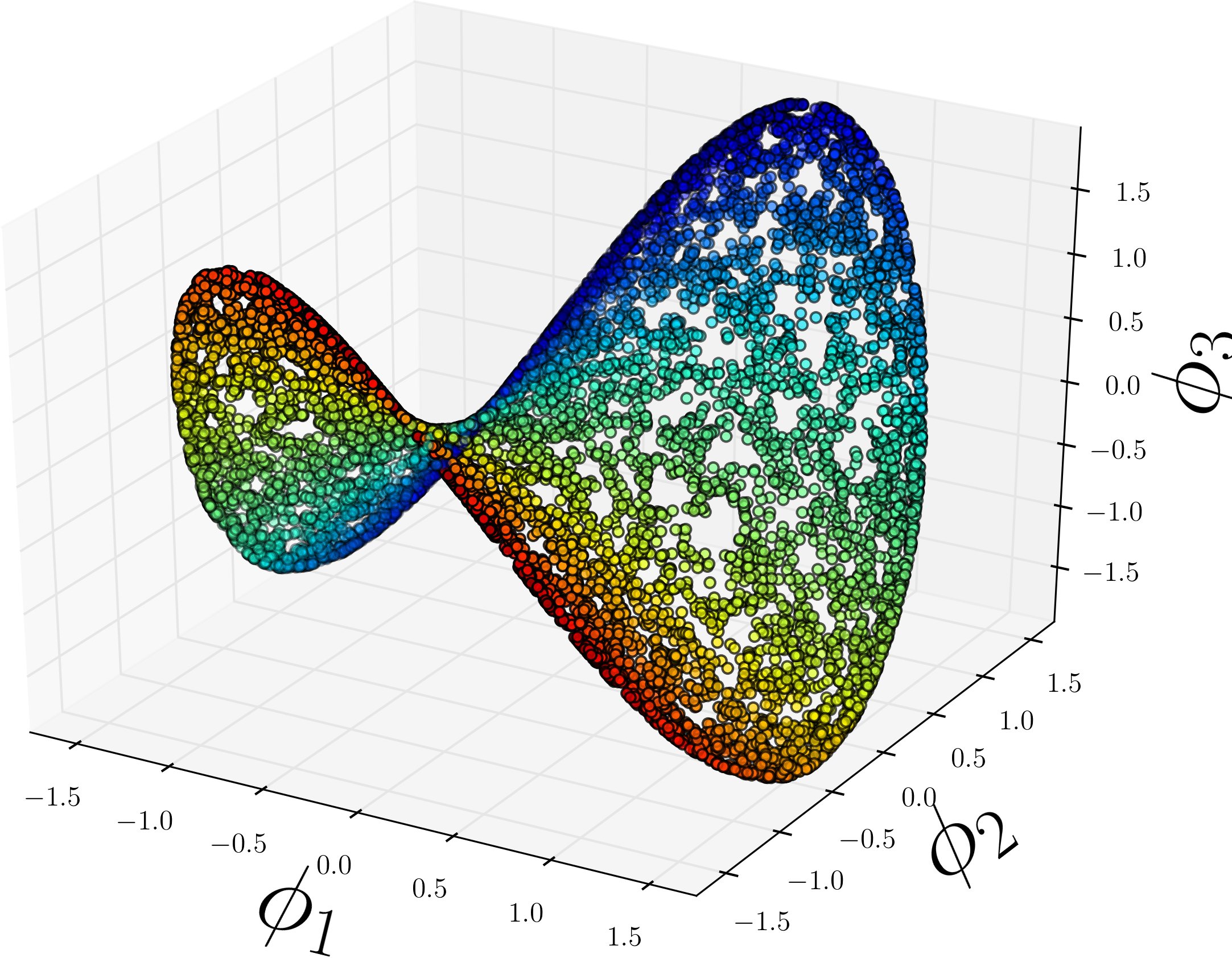} &
\includegraphics[width=.29\textwidth]{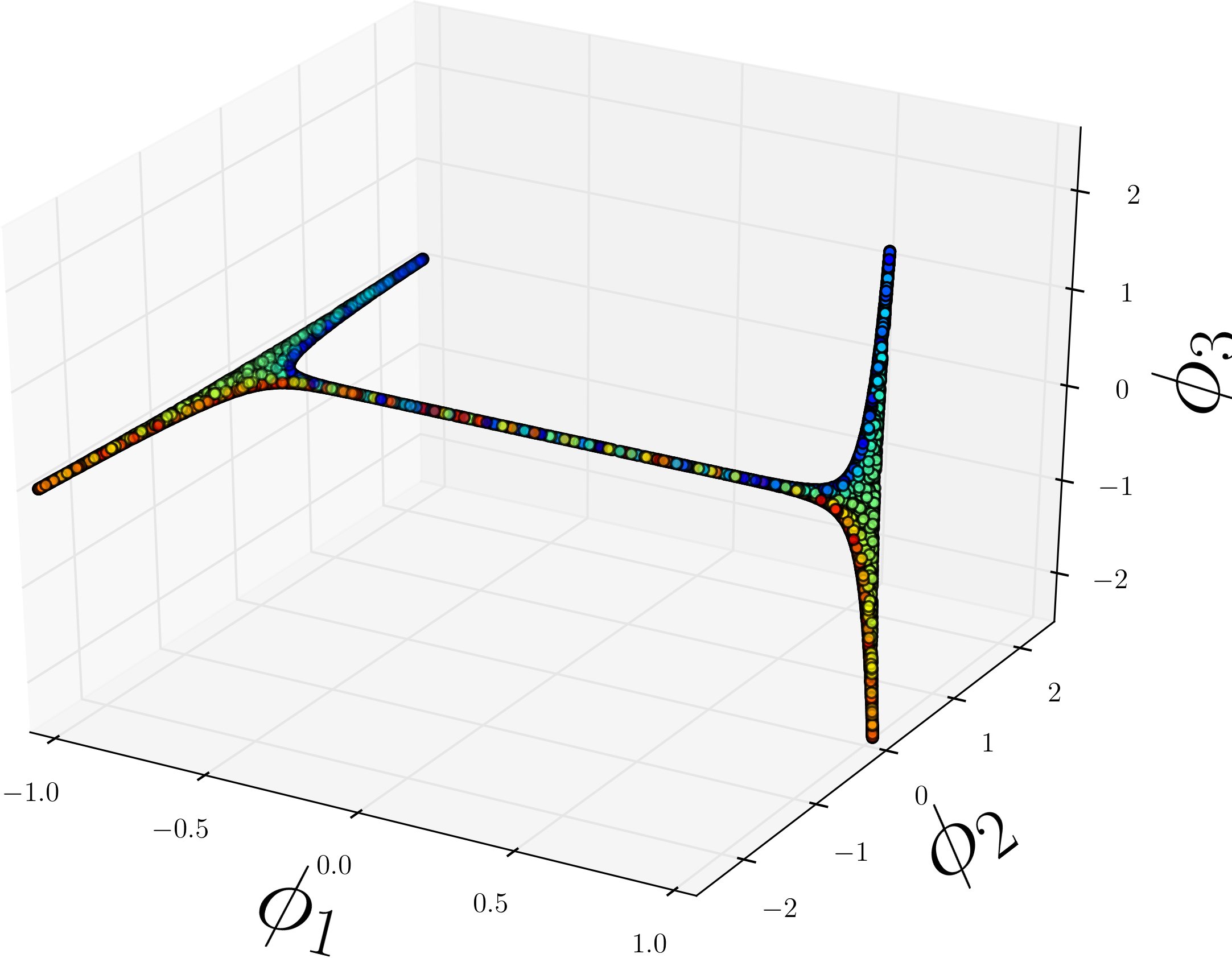} \\ \hline
\rotatebox{90}{\,Concatenated data DM\,\,} &
\includegraphics[width=.29\textwidth]{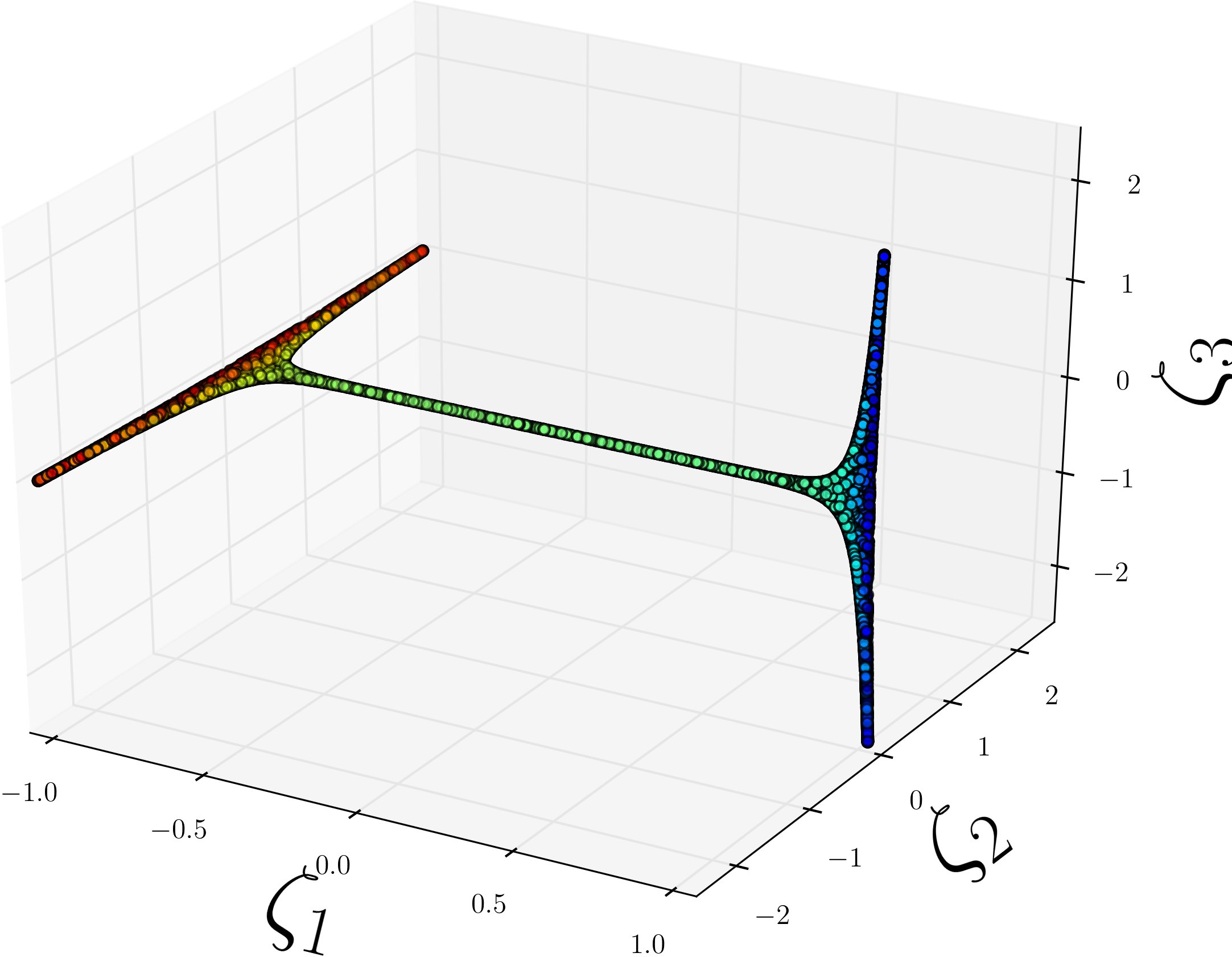} &
\includegraphics[width=.29\textwidth]{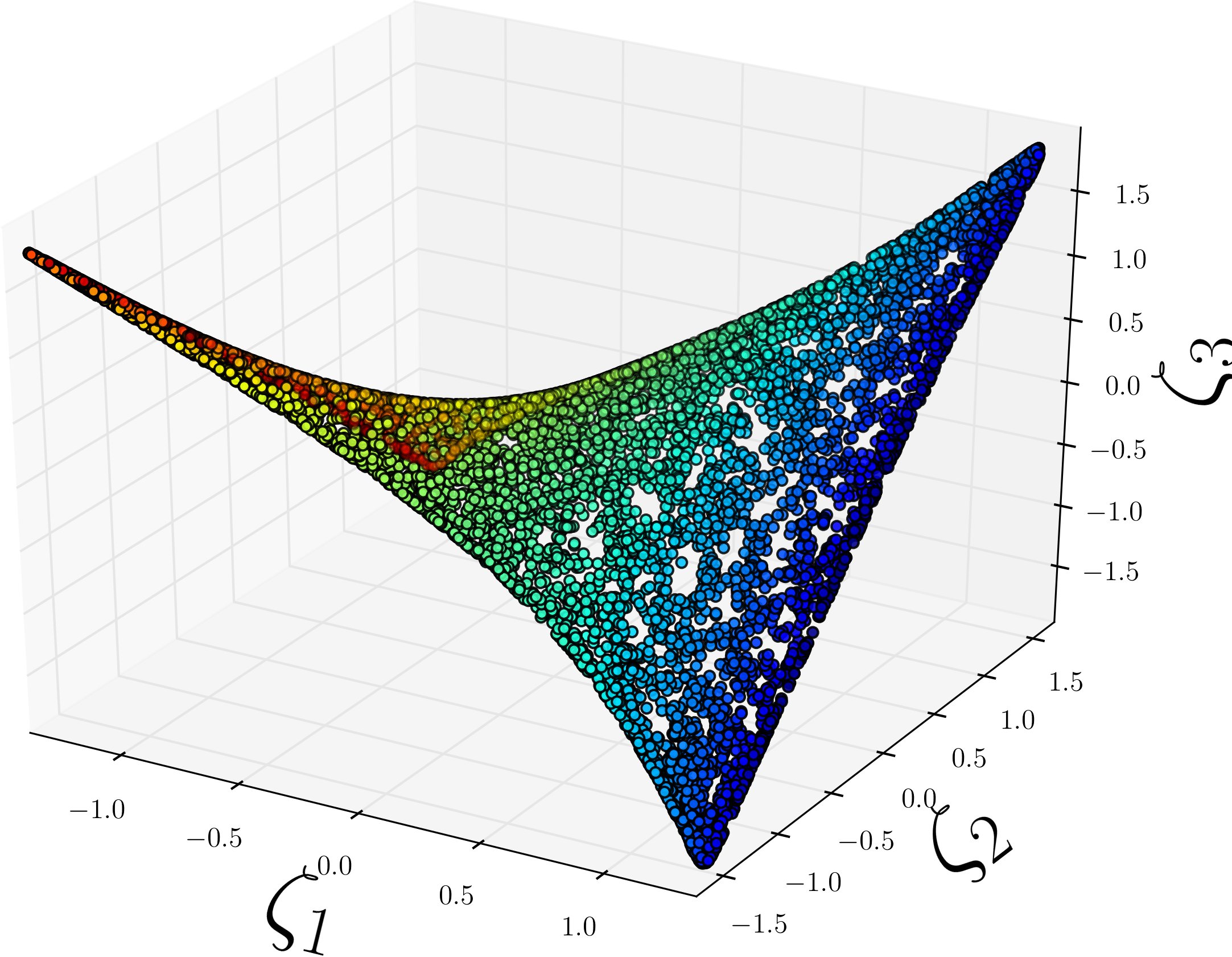} &
\includegraphics[width=.29\textwidth]{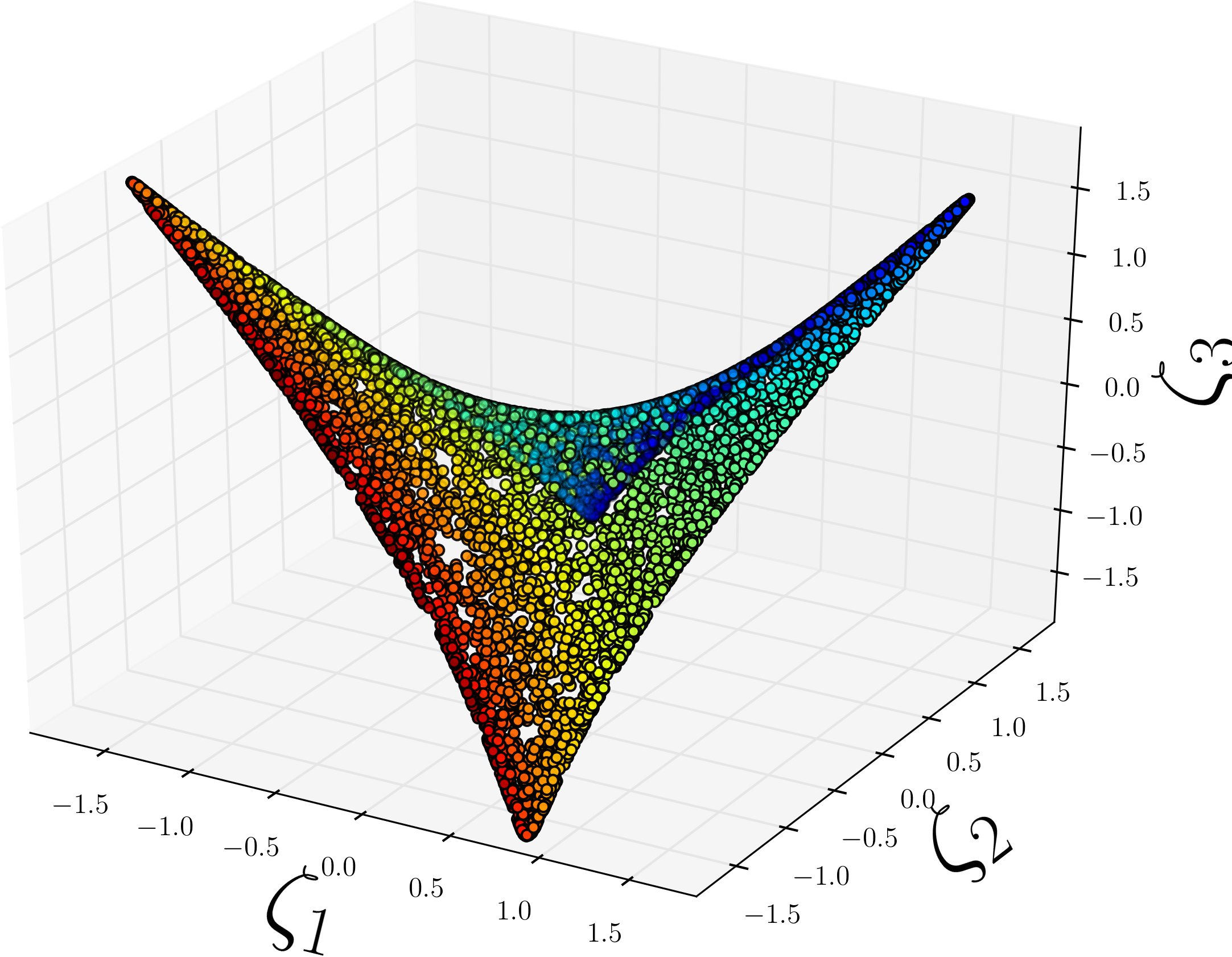} \\ \hline \rotatebox{90}{
Time coupled DM \,\,} & \includegraphics[width=.29\textwidth]{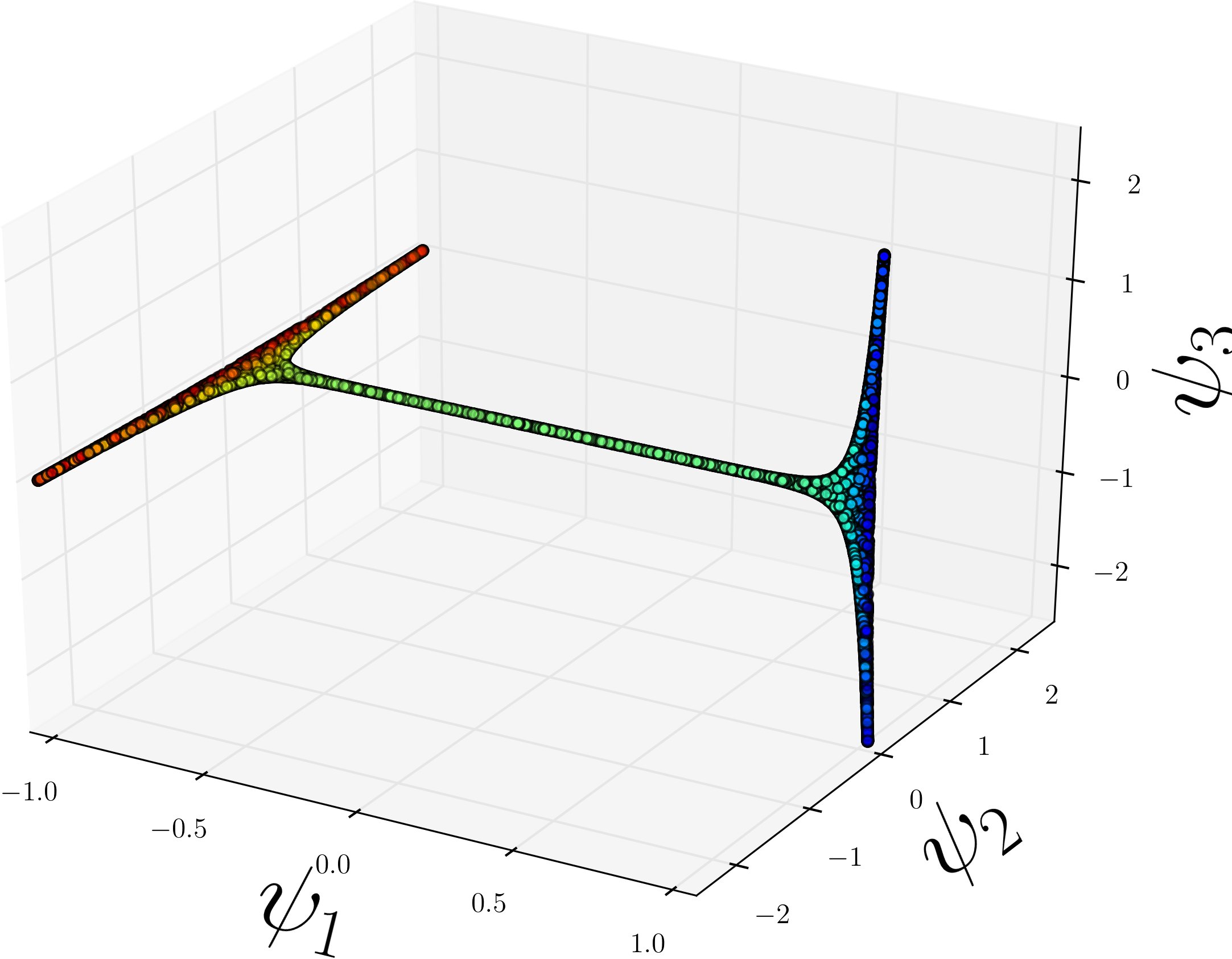} &
\includegraphics[width=.29\textwidth]{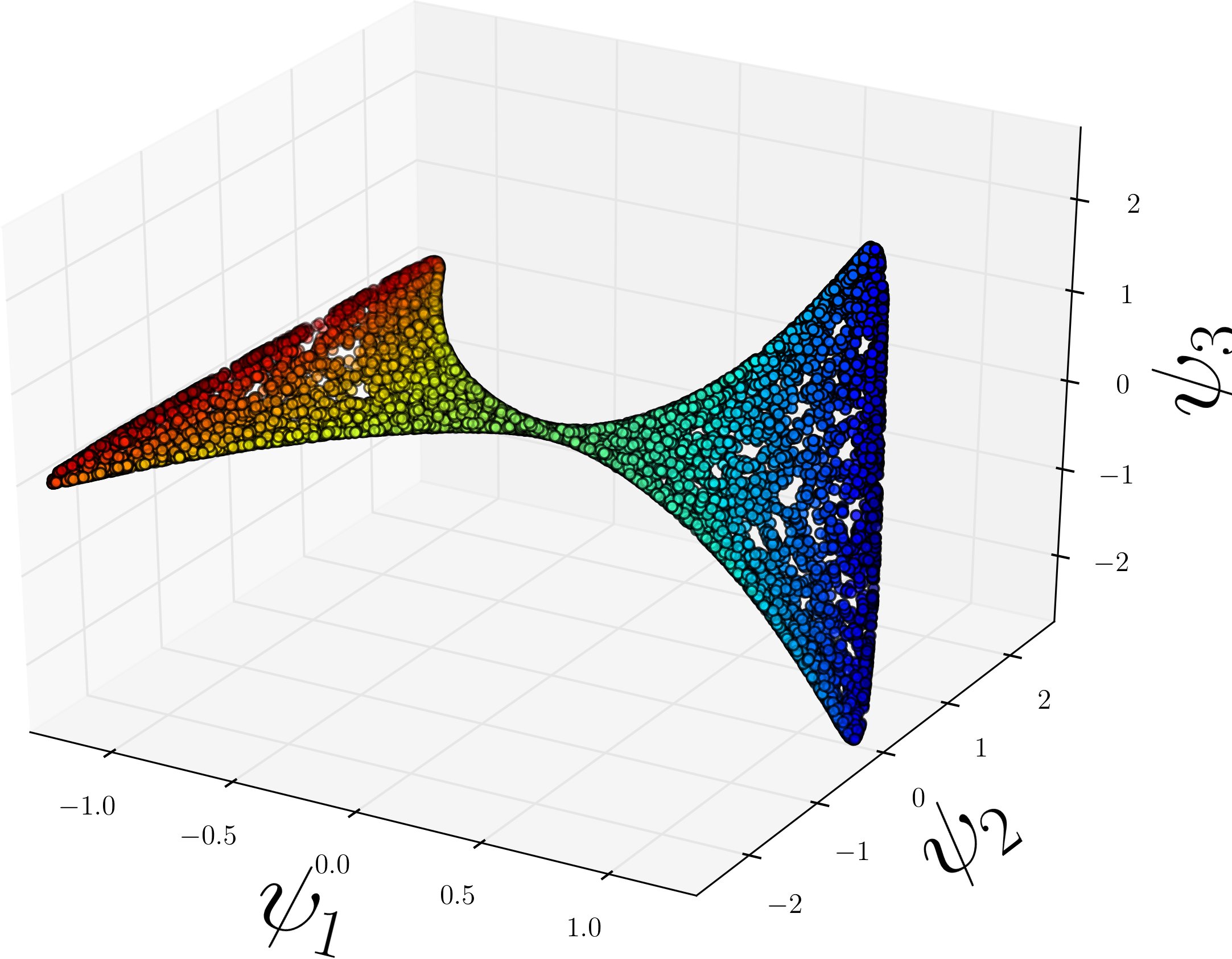} &
\includegraphics[width=.29\textwidth]{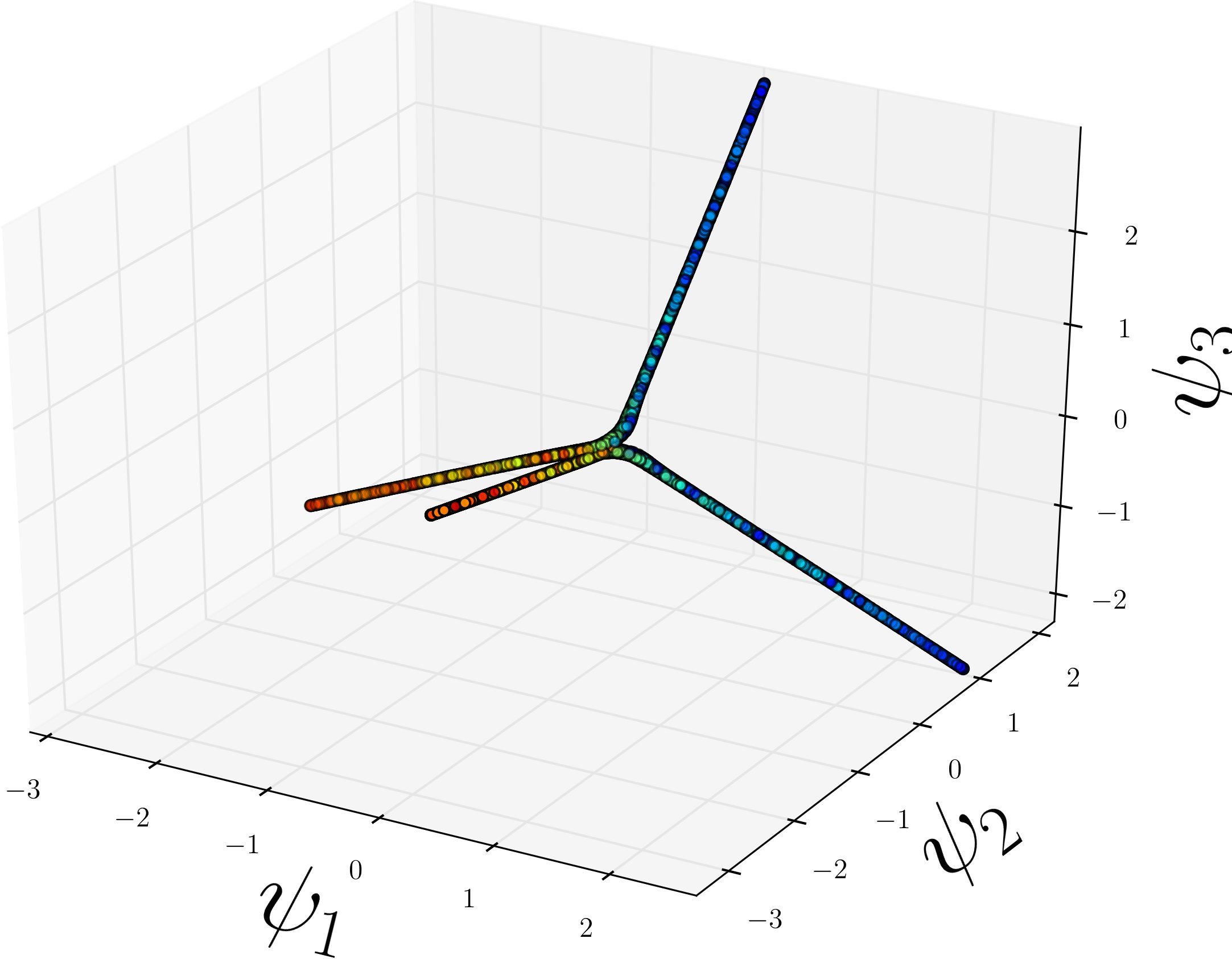} \\ 
\end{tabular}
\caption{In the first row, the data sets $X_i$ are plotted for times $i=1,5,9$;
in the second row, the first three coordinates of the diffusion map of the fixed
sets $X_1,X_5,$ and $X_9$ are plotted; in the third row, the first three
coordinates for the concatenated data diffusion map (as described in Section
\ref{sec:numerical}) for the sequences of data sets $\{X_i\}_{i=1}^1$,
$\{X_i\}_{i=1}^5$, and $\{X_i\}_{i=1}^9$ are plotted; finally, in the fourth
row, the time coupled diffusion map for the sequences of data sets
$\{X_i\}_{i=1}^1$, $\{X_i\}_{i=1}^5$, and $\{X_i\}_{i=1}^9$ are plotted. A
consistent color map is used across all plots.} \label{fig:barbell}
\end{figure}

From a computational point of view the sequence of data sets $(X_1,\ldots,X_9)$ can
be stored as an $10,000 \times 2 \times 9$ data tensor corresponding to the
$10,000$ points in $\bR^2$ measured over the $9$ times. Moreover, by fixing the
first coordinate of this tensor, we have a $2 \times 9$ matrix corresponding to
the trajectory of a single point through $\bR^2$ over the $9$ times.  Note how,
as discussed in Section \ref{sec:2.2}, even though points were sampled uniformly
at time $i=5$, the deformation of the data manifold creates a
nonuniform distribution of points at the other times. In the following, we
compare three methods of embedding the data.

First, as a baseline, we compute the diffusion map for each fixed data
set $X_1$, $X_5$, and $X_9$ and plot the first three embedding coordinates
$\phi_1,\phi_2,\phi_3$ in the second row of Figure \ref{fig:barbell}.  Second,
we define a concatenated data diffusion map based on the concatenation of the
data up to time $i$. That is, at time $i$ we consider the data
$$
X_{(i)} := \left\{ \left(x_j^{(1)},\ldots,x_j^{(i)} \right) \right\}_{j=1}^n
\subseteq \mathbb{R}^{2 i},
$$
and construct a diffusion map via equations  \eqref{eqn:markov1},
\eqref{eqn:markov2}, \eqref{eqn:markov3}, and \eqref{eqn:markov4}, where the
kernel $\K_{\varepsilon, i}(x_j,x_k)$  in equation \eqref{eqn:markov1} is
replaced by the Gaussian kernel $\K^\text{concat}_{\varepsilon, i}(x_j,x_k)$
defined by
$$
\K^\text{concat}_{\varepsilon, i}(x_j,x_k)  := 
\exp \left( - \frac{\left\| \left( x_j^{(1)},\ldots,x_j^{(i)} \right) - \left(
x_k^{(1)},\ldots,x_k^{(i)} \right) 
\right\|^2_{\ell^2 \left(\mathbb{R}^{2 i} \right)}}{\varepsilon} \right).
$$
The first three coordinates $\zeta_1,\zeta_2,\zeta_3$ of the concatenated data
diffusion map are plotted in the third row of Figure \ref{fig:barbell}.
Finally, for $\{X_i\}_{i=1}^1$, $\{X_i\}_{i=1}^5$, and $\{X_i\}_{i=1}^9$ we
computed the time coupled diffusion map and plot the first three coordinates
$\psi_1,\psi_2, \psi_3$ in the fourth row of Figure \ref{fig:barbell}. We
would like to draw the reader's attention to the difference between the
concatenated data diffusion map and the time coupled diffusion map at time
$i=9$. To facilitate visualization, we have plotted the first two coordinates of
these embeddings in Figure \ref{fig3}.
\begin{figure}[h!]
\centering
\begin{tabular}{cc}
\includegraphics[width=.35\textwidth]{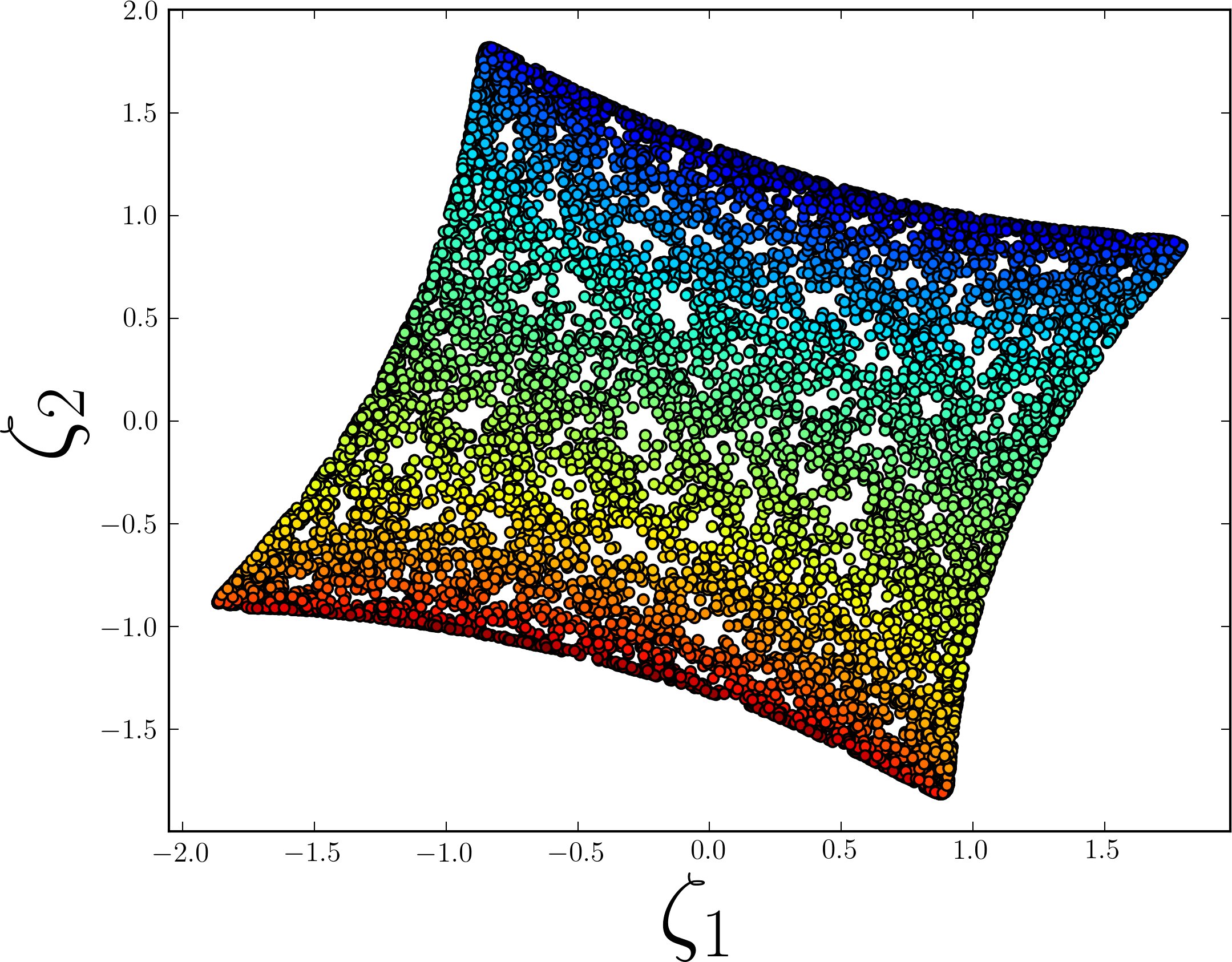} &
\includegraphics[width=.35\textwidth]{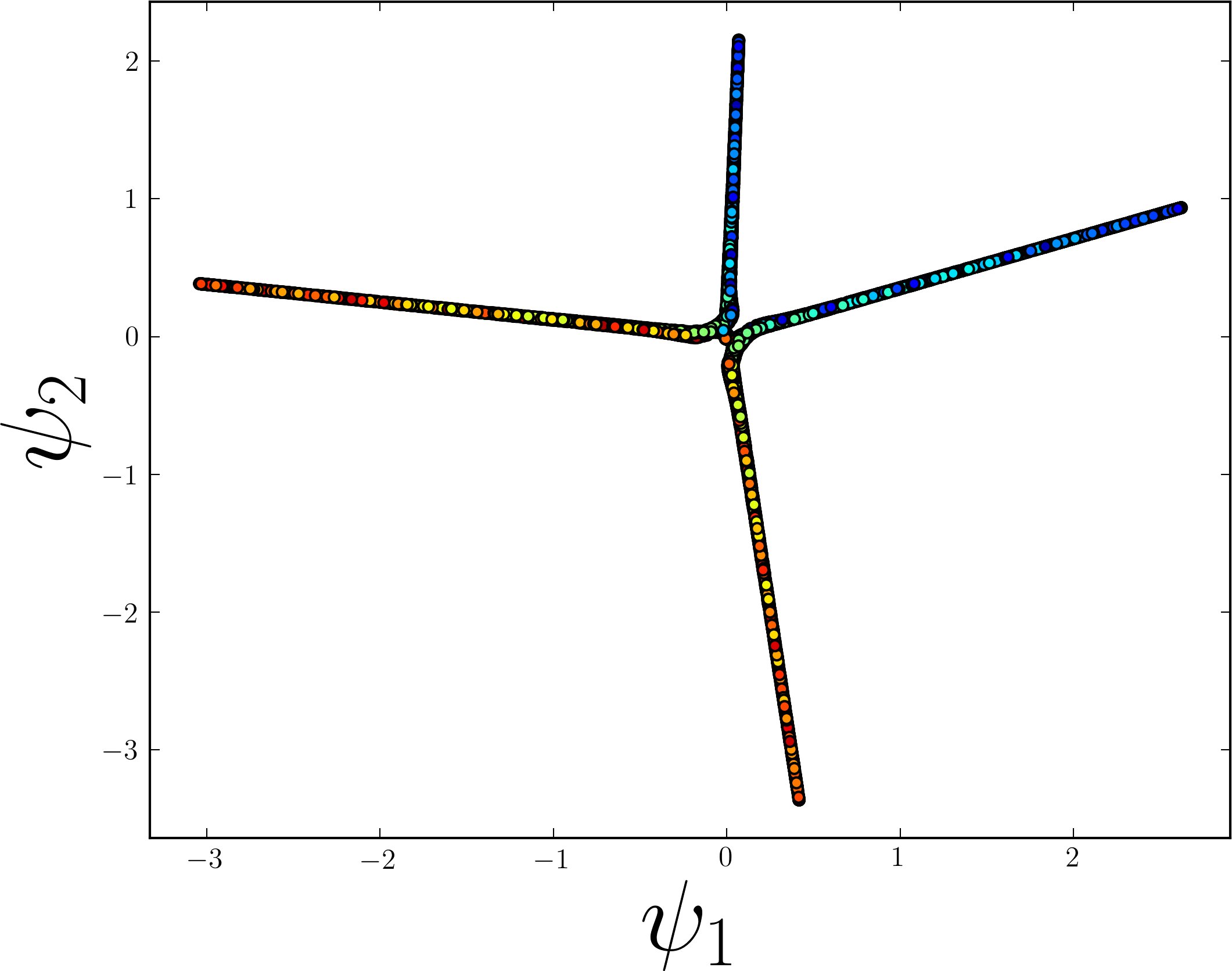}
\end{tabular}
\caption{For time $i=9$ the first two coordinates of the concatenated data
DM (left) and time coupled DM (right) are plotted.} \label{fig3}
\end{figure}

Observe that each point $x_j$ in the data $\left\{x_j^{(i)} \right\}_{j=1}^n$,
$i=1,\ldots,9$ can be assigned one for four classes: 
$$ 
\text{(right-up), (right-down), (left-up), or (left-down),}
$$
corresponding to side (left or right) that the point resides in the horizontal
barbell at $i=1$, and the side (up or down) that the point resides in the
vertical barbell at time $i=9$. Each of the four lines in the time coupled diffusion
map embedding correspond to one of these four classes. On the other hand, we
interpret the concatenated data diffusion map by observing that the kernel
$\K^\text{concat}_{\varepsilon, i}(x_j,x_k)$ can also be written
$$
\K^\text{concat}_{\varepsilon, i}(x_j,x_k)  = 
\exp \left( -\frac{\sum_{l=1}^i \|x_j^{(l)} -
x_k^{(l)}\|^2_{\ell^2 \left(\mathbb{R}^2 \right)}}{\varepsilon} \right) 
= \prod_{l=1}^i \exp \left( - \frac{\|x_j^{(l)} - x_k^{(l)}
\|^2_{\ell^2 \left(\mathbb{R}^2 \right)}}{\varepsilon} \right).
$$
Thus, concatenating the available data is equivalent to either averaging (up to
a constant) the square distances for the available times, or multiplying the
Gaussian kernels pointwise for the available times. The resulting concatenated
data diffusion map encodes the fact that on average the points are spread out
over the disc; the only indication of the existence of the barbells at times
$i=1$ and $i=9$ are the corners in the embedding. In contrast, the time coupled
diffusion map embedding clearly captures the two barbell events: the four lines
in this embedding represent the four classes in the
data induced by the barbell events at times $i=1$ and $i=9$. Moreover, this
numerical example demonstrates that the time coupled diffusion map embedding
aggregates information over time. Indeed, the two barbell events are
temporally disjoint and thus no single temporal slice of the data is
sufficient to recover the four classes in the data. This temporal disjointness
is reflected in the standard diffusion map embeddings in row two of Figure
\ref{fig:barbell}. In summary, the numerical results provide an example where
averaging square distances over time, or equivalently multiplying Gaussian
kernels pointwise over time, is insufficient to capture geometric events in the
evolution of the data; instead, by viewing the data as an evolving manifold and
approximating heat flow on the manifold via the product of diffusion operators,
we are able to define an embedding which provides a concise description of the
structures which occur in the evolution of the data.

\section{Continuous time coupled diffusion map} \label{sec: cont tcdm} 
Theorem \ref{thm:main} approximates the operator $T_Z^{(t)}: L^2(\M) \rightarrow
L^2(\M)$, defined in \eqref{eqn: TZt def}, with the discrete inhomogeneous
Markov chain $\PP_{\varepsilon}^{(\lceil t/\varepsilon \rceil)}$. The
inhomogeneous chain $\PP_{\varepsilon}^{(\lceil t/\varepsilon \rceil)}$ is the
foundation of the time coupled diffusion distance, and after the normalization
in \eqref{eqn:PiP} the singular value decomposition can be used to construct the
corresponding time coupled diffusion map \eqref{eqn:tcdm}. In this section we further investigate the connection between the heat kernel $Z$ and
the time coupled diffusion map through the continuous operator $T_Z^{(t)}$.

Recall the diffusion operator $T_Z^{(t)} f(x) = \int_{\M} Z(x,t; y,0) f(y) \,
dV(y,0)$ introduced in Section \ref{sec:manifold}. If $f$ is an initial
distribution over $\M$, the operator $T_Z^{(t)}$ diffuses $f$ forward in time
over the changing manifold geometry, up to time $\tau = t$. The adjoint of this
operator is $\left(T_Z^{(t)}\right)^{\ast}: L^2 (\M) \rightarrow L^2 (\M)$,
\begin{equation*}
\left(T_Z^{(t)}\right)^{\ast} f(x) = \int_{\M} Z(y,t; x,0) f(y) \, dV(y,t).
\end{equation*}
It follows from \eqref{eqn: row sums one} that the operator
$\left(T_Z^{(t)}\right)^{\ast}$ maps a probability distribution over $(\M, g(t))$
backwards in time to a probability distribution over $(\M, g(0))$.
Indeed, if $f$ is a probability density function, then
$\left(T_Z^{(t)}\right)^{\ast}f$ is its posterior probability distribution as in
Section \ref{sec:tcdm}. In the continuous setting, the analysis is performed
over the manifold $(\M,g(\cdot))$, and all integration is performed with respect
to the Riemannian volume on the manifold similar to the approach in
\cite{abdallah:embedTimeManifold2012,Abdallah:2010,
berard:embedManifoldHeatKer1994}.  In the continuous case, the stationary
distribution is constant and the normalization in \eqref{eqn:PiP} is therefore
unnecessary. Moreover, the operator $T_Z^{(t)}$ is bi-stochastic in the sense
that the constant function is an eigenfunction of both $T_Z^{(t)}$ and $\left(
T_Z^{(t)} \right)^*$. Indeed, in Lemma \ref{prop: heat kernel 1st order
expansion} 
we showed that $T_Z^{(t)}$ can be written as the limit of a
product -- with an increasing number of terms -- of self-adjoint operators, which each
have the constant eigenfunction. Therefore, the continuous analog of the
discrete diffusion distance \eqref{eqn: tcdd} is the diffusion distance 
\begin{equation*}
D^{(t)} (x,y)^2 = \int_{\M} ( Z(x,t; w, 0) - Z(y,t; w, 0))^2 \, dV(w,0),
\end{equation*}
or equivalently, 
\begin{equation*}
D^{(t)} (x,y) = \| \left(T_Z^{(t)}\right)^{\ast} \delta_x -
\left(T_Z^{(t)}\right)^{\ast} \delta_y \|_{L^2(\M)}, 
\end{equation*}
where $\delta_x$ is the Dirac
distribution centered at $x \in \M$ (compare to
\eqref{eqn: tcdd}).  Since the ``row'' sums of $Z(x,t,y,0)$ are $1$ (see 
\eqref{eqn: row sums one}), and the manifold $\cM$ is compact:
\[
\int_{\M} \int_{\M} Z(x,t; y,0) dV(y,0) dV(x,t) < \infty
\]
Therefore, the operator $T_Z^{(t)}f(x) = \int_{\M} Z(x,t; y,0) f(y) \, dV(y,0)$
is compact, see for example \cite{Pedersen:2000}. Similarly, the operator
$\left(T_Z^{(t)}\right)^{\ast}$ is compact, and thus $T_Z^{(t)}
\left(T_Z^{(t)}\right)^{\ast}$ is compact, since it is the composition of two
compact operators.  Moreover, since $T_Z^{(t)} \left(T_Z^{(t)}\right)^{\ast}$ is
also self-adjoint, by the spectral theorem $T_Z^{(t)}
\left(T_Z^{(t)}\right)^{\ast}$ has an eigendecomposition, and similarly,
$\left(T_Z^{(t)}\right)^{\ast} T_Z^{(t)}$  has an eigendecomposition.
Therefore, analogous to the discrete decomposition \eqref{eqn:Asvd}, $T_Z^{(t)}$
has a singular value decomposition.
\begin{equation*} T_Z^{(t)}f(x) = \sum_{k\ge0} \sigma_k^{(t)} \langle f,
\varphi_k^{(t)} \rangle \psi_k^{(t)}(x), \end{equation*}
where $\varphi_k^{(t)} \in L^2(\M)$ is a right singular function of $T_Z^{(t)}$
(an eigenfunction of $\left(T_Z^{(t)}\right)^{\ast} T_Z^{(t)}$),
$\psi_k^{(t)} \in L^2(\M)$ is a left singular function of $T_Z^{(t)}$
(an eigenfunction of $T_Z^{(t)} \left(T_Z^{(t)}\right)^{\ast}$), and
$\{ \sigma_k^{(t)} \}_{k\ge0}$ are the corresponding singular values (the square
root of the eigenvalues of $\left(T_Z^{(t)}\right)^{\ast} T_Z^{(t)}$, or
equivalently the square root of the eigenvalues of $T_Z^{(t)}
\left(T_Z^{(t)}\right)^{\ast}$ since these operators share the same spectrum).
However, recall that in the discrete case, we must take into account the
stationary distribution of the Markov chain using the normalization in
\eqref{eqn:PiP}, since the discrete operator $\PP_{\varepsilon}^{(t)}$ is not in
general bi-stochastic. In the continuous setting left singular functions are
used to define the time coupled diffusion map \eqref{eqn:tcdm}, written here as:
\begin{equation*} 
\Psi^{(t)} (x) = \left( \sigma_k^{(t)} \psi_k^{(t)} (x) \right)_{k\ge1}.  
\end{equation*} 
where the index $k$ starts at $1$ because the $0$th left singular function is
constant.  

We remark that  the operator $T_Z^{(t)} \left(T_Z^{(t)} \right)^*$ can be
written as an integral operator
\begin{equation} \label{eqn: left (t) operator} 
T_Z^{(t)} \left( T_Z^{(t)} \right)^{\ast} f(x) = \int_{\M} K_Z^b (x,y;t) f(y) \, dV(y, t), 
\end{equation}
where
\begin{equation*} 
K_Z^b (x,y; t) = \int_{\M} Z(x, t; w, 0) Z(y, t; w, 0) \, dV(w,0).  
\end{equation*}
We refer to $K_Z^b(x,y;t)$ as the backwards kernel since it is integrated
against functions on $(M,g(t))$, i.e., integrated against functions with respect
to the Riemannian volume at the end of the time interval $[0,t]$.
Similarly, the operator $\left(T_Z^{(t)}\right)^\ast T_Z^{(t)}$ has integral
representation
\begin{equation*} 
\left(T_Z^{(t)} \right)^\ast T_Z^{(t)} f(x) = \int_{\M} K_Z^f (x,y;t) f(y) \, dV(y, 0), 
\end{equation*}
where
\begin{equation*} K_Z^f (x, y; t) = \int_{\M} Z(w, t; x, 0) Z(w, t; y, 0) \,
dV(w, t).  
\end{equation*}
We refer to $K_Z^f$ as the forward kernel since it is integrated against
functions on $(\M, g(0))$. The kernels $K_Z^b$ and $K_Z^f$ can be
interpreted as analogous to the matrices $\vA^{(t)} (\vA^{(t)})^\top$ and
$(\vA^{(t)})^\top \vA^{(t)}$ from the discrete case, whose eigenfunctions are
the left and right singular vectors of $\vA^{(t)}$, respectively. 

Since the time coupled diffusion map is based on the SVD of
$T_Z^{(t)}$ diffusing $\varphi_k^{(t)}$ forward in
time results in $\sigma_k^{(t)} \psi_k^{(t)}$, and conversely the backward
propagation of $\psi_k^{(t)}$ leads to $\sigma_k^{(t)} \varphi_k^{(t)}$; specifically
$T_Z^{(t)} \varphi_k^{(t)} = \sigma_k^{(t)} \psi_k^{(t)}$ and
$\left(T_Z^{(t)}\right)^{\ast} \psi_k^{(t)} = \sigma_k^{(t)} \varphi_k^{(t)}$. If we
define,
\begin{equation*}
\widetilde{\Phi}^{(t)}(x) = \left( \varphi_k^{(t)}(x) \right)_{k\ge1}
\end{equation*}
and set $T_Z^{(t)} \widetilde{\Phi}^{(t)} (x) = ( T_Z^{(t)} \varphi_k^{(t)} (x)
)_{k\ge1}$, then this observation leads to:

\begin{equation*}
D^{(t)} (x,y) = \| \left(T_Z^{(t)} \right)^{\ast} \delta_x - \left(T_Z^{(t)}
\right)^{\ast} \delta_y
\|_{L^2(\M)} = \| \Psi^{(t)}(x) - \Psi^{(t)}(y) \|_{\ell^2} = \|
T_Z^{(t)} \widetilde{\Phi}^{(t)} (x) - T_Z^{(t)} \widetilde{\Phi}^{(t)} (y) \|_{\ell^2}.
\end{equation*}

We see that the difference in posterior distributions of $\delta_x$
and $\delta_y$ is equivalent to the difference between the diffused
distributions of $\widetilde{\Phi}^{(t)}(x)$ and
$\widetilde{\Phi}^{(t)}(y)$; these equivalent distances are, in turn,
equal to the Euclidean distance between points under the time coupled
diffusion map $\Psi^{(t)}$.

\section{Conclusion} \label{sec: conclusion}

We have introduced the notion of time coupled diffusion maps as a method of
summarizing evolving data via an embedding into Euclidean space. In particular,
we have introduced a method of modeling evolving data as samples from a manifold
with a time-dependent metric. We show that the constructed time
inhomogeneous Markov chain  approximates heat diffusion over a manifold $(\M,
g(\cdot))$ with a smoothly varying family of metrics $g(\tau)$. In the context
of manifold learning, these operators and resulting embeddings are related to
the heat kernel of $\partial_t u = \Delta_{g(t)} u$, which provides geometric
and probabilistic interpretations. Numerical experiments indicate that the map
encodes aggregate geometrical information over arbitrarily long time scales,
and thus summarizes the geometry of a sequence of datasets in a useful way.

These results open new directions related to diffusion based manifold
learning, and in particular the use of inhomogeneous Markov chains and
asymmetric diffusion semi-groups to
understand data geometry. Early results on single cell data
\cite{welp:condensationSingleCell2016} show the usefulness of this
type of diffusion process in biology, but further numerical and
theoretical study is needed. Understanding precisely the nature of the
geometrical information encoded by the time coupled diffusion map
(extending \cite{Abdallah:2010, abdallah:embedTimeManifold2012}),
would give theoretical insight and could lead to further
developments. More immediately, the results contained here provide a
theoretical foundation for understanding dynamic data that can be
modeled as a time varying manifold. 

\section{Acknowledgements} 
N.M. was a participant in the 2013 Research Experience for
Undergraduates (REU) at Cornell University under the supervision of
M.H. During the REU program both were supported by the National
Science Foundation grant number NSF-1156350. This paper is the result
of work started during the REU. M.H. was supported by the European
Research Council (ERC) grant InvariantClass 320959 while writing the first
version of the paper. He is currently supported by the Alfred P. Sloan
Fellowship (grant number FG-2016-6607), the DARPA Young Faculty Award
(grant number D16AP00117), and the NSF (grant number 1620216).

Both authors would like to thank Ronald Coifman for numerous
insightful conversations, and the reviewers for their comments which improved
the manuscript.

\appendix

\section{Diffusion geometry from Markov kernels} \label{dmfrommarkov}

We show that the operator $\vA^{(t)}$, defined in Section
\ref{sec:tcdm}, line \eqref{eqn:PiP}, has
operator norm one and that $\vpi_{(t)}^{1/2}$, the square root of its
stationary distribution, is an eigenvector with eigenvalue one. We do
so by following \cite{Lafon:2004} and developing a diffusion geometry framework
starting from a Markov kernel. 

Suppose that $(\cX,dx)$ is a measure space equipped with a Markov
kernel $p(x,y)$. Moreover, we assume the kernel $p(x,y)$ nonnegative and has the
conservation property
\[ \int_\cX p(x,y) \, dy =1 \quad \text{for all } x \in \cX. \]
The kernel $p(x,y)$ induces a Markov operator $P^*$ from $L^2(\cX)$ to
itself defined by
\[ (P^*f)(x) = \int_\cX p(y,x) f(y) \, dy \quad \text{for all } x\in \cX.
\]
The notation $P^*$ has been used to bring attention to the fact that
$P^*$ is the adjoint of the operator $P$ defined as:
\begin{equation*}
(P f)(x) = \int_{\cX} p(x,y) f(y) \, dy.
\end{equation*}
We assume that $P^*$ has a unique strictly positive stationary distribution
$v^2(x)$, that is to say,
\[ (P^* v^2)(x) = \int_\cX p(y,x) v^2(y) \, dy = v^2(x) \quad \text{for all
} x \in \cX. \]
For example, if $\cX$ is a finite set with the counting measure (as in
Section \ref{sec:tcdm}), it
would suffice to assume that the Markov chain is irreducible and positive
recurrent. By the function $v(x)$, we denote the positive pointwise square root
of $v^2(x)$.
We define an operator $A : L^2(\cX) \rightarrow L^2(\cX)$ by 
\begin{equation} \label{defa}
 (A f)(x) =
\int_\cX \frac{v(x) p(x,y) }{v(y)} f(y) \, dy = \int_\cX a(x,y) d(y)
\, dy \quad \text{for all } x \in \cX.
\end{equation}
Here the kernel $a(x,y) = v(x) p (x,y)/v(y)$.
We refer to $A$ has an averaging or diffusion operator and show
it has several nice properties.

\begin{lem}
The operator norm $\|A\| = 1$, and the norm is achieved by the
function $v(x)$. 
\end{lem}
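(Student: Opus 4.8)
The plan is to prove $\|A\|\ge 1$ and $\|A\|\le 1$ separately, with the lower bound witnessed directly by the function $v$ itself. For the lower bound I would first compute $Av$ from the definition: since $a(x,y)=v(x)p(x,y)/v(y)$,
\[
(Av)(x)=\int_\cX \frac{v(x)p(x,y)}{v(y)}\,v(y)\,dy=v(x)\int_\cX p(x,y)\,dy=v(x),
\]
by the conservation property $\int_\cX p(x,y)\,dy=1$. Since the stationary distribution $v^2$ is integrable, $v\in L^2(\cX)$, and $v>0$ gives $v\neq 0$; hence $v$ is an eigenfunction of $A$ with eigenvalue $1$, so $\|A\|\ge\|Av\|_{L^2}/\|v\|_{L^2}=1$ and the operator norm is attained at $v$.

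For the upper bound, note first that replacing $f$ by $|f|$ only increases $\|Af\|_{L^2}$ (because $a\ge 0$), so we may assume $f\ge 0$. The key point is that for each fixed $x$ the slice $p(x,\cdot)$ is a probability density on $\cX$. Writing $f=vg$ (valid since $v>0$ pointwise), we have $(Af)(x)=v(x)\int_\cX p(x,y)g(y)\,dy$, and Jensen's inequality for the convex function $t\mapsto t^2$ against the probability measure $p(x,y)\,dy$ gives
\[
(Af)(x)^2\le v(x)^2\int_\cX p(x,y)\,g(y)^2\,dy=v(x)^2\int_\cX p(x,y)\,\frac{f(y)^2}{v(y)^2}\,dy.
\]
Integrating in $x$ and using Tonelli's theorem to exchange the order of integration (all integrands are nonnegative) yields
\[
\|Af\|_{L^2}^2\le\int_\cX\frac{f(y)^2}{v(y)^2}\left(\int_\cX v(x)^2 p(x,y)\,dx\right)dy.
\]
I would then identify the inner integral with $v^2(y)$: the stationarity relation $P^*v^2=v^2$, written out, is precisely $\int_\cX p(x,y)v^2(x)\,dx=v^2(y)$. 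Substituting gives $\|Af\|_{L^2}^2\le\int_\cX f(y)^2\,dy=\|f\|_{L^2}^2$, so $\|A\|\le 1$. Together with the lower bound this yields $\|A\|=1$, achieved by $v$.

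I do not expect a genuine obstacle; in essence the argument just conjugates the $L^2$-contraction estimate for a Markov operator through multiplication by $v$. The only steps needing care are the reduction to $f\ge 0$, the reparametrization $f=vg$, and the interchange of integrals -- all justified by the strict positivity of $v$, the nonnegativity of $p$, and the standing assumption that the stationary density $v^2$ is integrable (so that $v\in L^2(\cX)$). The single line carrying the actual content is the recognition that the stationarity of $v^2$ is exactly what collapses $\int_\cX v(x)^2 p(x,y)\,dx$ back to $v^2(y)$.
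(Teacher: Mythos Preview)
Your proof is correct and follows essentially the same strategy as the paper: both verify $Av=v$ directly for the lower bound, and for the upper bound both obtain a pointwise estimate on $(Af)(x)^2$ using the probability structure of $p(x,\cdot)$, then integrate in $x$ and invoke the stationarity relation $\int v^2(x)p(x,y)\,dx=v^2(y)$. Your Jensen-inequality step is exactly the paper's first Cauchy--Schwarz application in disguise, and your Tonelli swap replaces the paper's second Cauchy--Schwarz (which is in fact applied to a function against itself and hence an equality), so your write-up is slightly more streamlined but not genuinely different.
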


\begin{rmk*}
Taking $A = \vA^{(t)}$ establishes the claim from the beginning
of the appendix, which was originally stated after
line \eqref{eqn:PiP} in Section \ref{sec:tcdm}.
\end{rmk*}

\begin{proof}
First we evaluate $(Av)(x)$ to check the second part of the assertion
and establish a lower bound on the operator norm:
\[ (Av)(x) = \int_\cX v(x) p(x,y) \frac{v(y)}{v(y)} \, dy = v(x) \int_\cX
p(x,y) \, dy = v(x), \]
using the Markov property of the kernel. To establish that $1$ is an
upper bound on the operator norm we show that $\langle Af , Af \rangle
\le \|f\|^2$. First, observe that by Cauchy-Schwartz,
\begin{align}
\left| \int_\cX \frac{v^2(x) p(x,y) f(y)}{v(y)} \, dy \right| &\le
\left( \int_\cX v^2(x) p(x,y) \, dy \right)^{1/2}
\left( \int_\cX \frac{ v^2(x) p(x,y)}{ v^2(y)} f^2(y) \, dy \right)^{1/2},
\nonumber \\
&= v(x) \cdot \left( \int_\cX \frac{ v^2(x) p(x,y)}{ v^2(y)} f^2(y)
\, dy \right)^{1/2}. \label{cauchy1}
\end{align}
Now starting with $\langle Af, Af \rangle$ we multiply and divide by $v^2(x)$
yielding:
\[ \langle Af , A f \rangle = 
\int_\cX \frac{1}{v^2(x)} \int_\cX \frac{v(x)^2 p(x,y) f(y)}{v(y)} \, dy
\int_\cX \frac{v(x)^2 p(x,z) f(z)}{v(z)} \, dz \, dx. \]
By applying the inequality \eqref{cauchy1} we see that,
\[ \langle Af, Af \rangle \le \int_\cX  \left( \int_\cX \frac{ v^2(x)
p(x,y)}{v^2(y)} f^2(y) \, dy \right)^{1/2}
\left( \int_\cX \frac{ v^2(x)
p(x,z)}{v^2(z)} f^2(z) \, dy \right)^{1/2} \, dx. \]
Applying Cauchy-Schwartz again,
\[ \langle Af, Af \rangle \le \left(\int_\cX \int_\cX \frac{ v^2(x)
p(x,y)}{v^2(y)} f^2(y) \, dy \, dx \right)^{1/2}
\left( \int_\cX \int_\cX \frac{ v^2(x)
p(x,z)}{v^2(z)} f^2(z) \, dz \, dx \right)^{1/2}. \]
Finally, using the fact that $v^2(x)$ is the stationary distribution shows that:
\[ \langle Af, Af \rangle \le \left( \int_\cX f(y)^2 \, dy \right)^{1/2}  \left(
\int_\cX f(z)^2 \, dz \right)^{1/2} = \|f\|^2, \]
as was to be shown.
\end{proof}

\section{Uniform boundedness of $\Delta_{g(\tau)}$ on
  $E_K$} \label{appendix: uniform bound}

We prove that the family of operators $\{ \Delta_{g(\tau)} \}_{0 \leq \tau
  \leq T}$ is uniformly bounded on $E_K \subset L^2 (\M)$, where
\begin{equation*}
E_K = \Span \{ \phi_l : 0 \leq l \leq K\}, 
\end{equation*}
and $\phi_l$ is the $l^{\text{th}}$ eigenfunction of $\Delta_{g(0)}$
with eigenvalue $\lambda_l$, ordered so that $0 = \lambda_0 <
\lambda_1 \leq \cdots \leq \lambda_K$. 

To simplify notation, set $\Delta_{\tau} = \Delta_{g(\tau)}: E_K \rightarrow
L^2(\M)$, and consider the function $\alpha (\tau) = \| \Delta_{\tau} \|$. If $\alpha
(\tau)$ is a continuous function in $\tau$, then $\alpha (\tau)$ is uniformly
bounded on $[0,T]$ since $\alpha (0) = \lambda_K <
\infty$ and $[0, T]$ is compact. It thus remains to show that $\alpha
(\tau)$ is a continuous function.

Let $\Delta_{\tau}^{\ast}: L^2(M) \rightarrow E_K$ be the adjoint of
$\Delta_{\tau}$. It suffices to show that $\beta (\tau) =
\| \Delta_{\tau}^{\ast} \Delta_{\tau} \|$ is a continous function in $\tau$, since
$\| \Delta_{\tau}^{\ast} \Delta_{\tau} \| = \| \Delta_{\tau} \|^2$. We have
$\Delta_{\tau}^{\ast} \Delta_{\tau}: E_K \rightarrow E_K$, and so the operator
$\Delta_{\tau}^{\ast} \Delta_{\tau}$ can be represented by the $(K+1) \times
(K+1)$ matrix $M_{\tau}$, defined through:
\begin{equation*}
\Delta_{\tau}^{\ast} \Delta_{\tau} \phi_j = \sum_{i=0}^{K+1} (M_{\tau})_{ij} \phi_i.
\end{equation*}
Recalling that the metric tensor $g(\tau)$ varies smoothly in $\tau$, it
follows that the entries of $M_{\tau}$ are continous in $\tau$ since
in local coordinates:
\begin{equation*}
\Delta_{\tau} f = \frac{1}{\sqrt{|g(\tau)|}} \partial_i \left( \sqrt{|g(\tau)|}
  g(\tau)^{ij} \partial_j f \right),
\end{equation*}
where $|g(\tau)|$ is the determinant of $g(\tau)$ in the local chart,
$g(\tau)^{ij}$ are the entries of the inverse of the metric tensor, and
the Einstein summation convention is used. But then the eigenvalues of
$M_{\tau}$ vary continuously in $\tau$, and in particular the operator norm of
$M_{\tau}$ is a continuous function of $\tau$. 


\begin{thebibliography}{10}
\expandafter\ifx\csname url\endcsname\relax
  \def\url#1{\texttt{#1}}\fi
\expandafter\ifx\csname urlprefix\endcsname\relax\def\urlprefix{URL }\fi
\expandafter\ifx\csname href\endcsname\relax
  \def\href#1#2{#2} \def\path#1{#1}\fi

\bibitem{Belkin:2003}
M.~Belkin, Problems of learning on manifolds, Ph.D. thesis, University of
  Chicago (2003).

\bibitem{Lafon:2004}
S.~Lafon, Diffusion maps and geometric harmonics, Ph.D. thesis, Yale University
  (2004).

\bibitem{Belkin:2005}
M.~Belkin, P.~Niyogi, Towards a theoretical foundation for {L}aplacian-based
  manifold methods, Journal of Computer and System Sciences 74~(8) (2008)
  1289--1308.

\bibitem{Coifman:2006}
R.~R. Coifman, S.~Lafon, Diffusion maps, Applied and Computational Harmonic
  Analysis 21~(1) (2006) 5 -- 30.

\bibitem{singer:GraphToManifold2006}
A.~Singer, From graph to manifold {L}aplacian: The convergence rate, Applied
  and Computational Harmonic Analysis 21~(1) (2006) 128--134.

\bibitem{singer:vectorDiffMaps2011}
A.~Singer, H.~Wu, Vector diffusion maps and the connection laplacian,
  Communications on Pure and Applied Mathematics 65~(8) (2012) 1067--1144.

\bibitem{wolf:linearProjDiff2011}
G.~Wolf, A.~Averbuch, Linear-projection diffusion on smooth euclidean
  submanifolds, Applied and Computational Harmonic Analysis 34~(1) (2013)
  1--14.

\bibitem{berry:localKernels2015}
T.~Berry, T.~Sauer, Local kernels and the geometric structure of data, Applied
  and Computational Harmonic Analysis (2015) In press.

\bibitem{Talmon:2013}
R.~Talmon, I.~Cohen, S.~Gannot, R.~Coifman, Diffusion maps for signal
  processing: A deeper look at manifold-learning techniques based on kernels
  and graphs, Signal Processing Magazine, IEEE 30~(4) (2013) 75--86.
\newblock \href {http://dx.doi.org/10.1109/MSP.2013.2250353}
  {\path{doi:10.1109/MSP.2013.2250353}}.

\bibitem{Szlam:2008}
A.~D. Szlam, M.~Maggioni, R.~R. Coifman,
  \href{http://jmlr.org/papers/volume9/szlam08a/szlam08a.pdf}{Regularization on
  graphs with function-adapted diffusion processes}, Journal of Machine
  Learning Research 9 (2008) 1711--1739.
\newline\urlprefix\url{http://jmlr.org/papers/volume9/szlam08a/szlam08a.pdf}

\bibitem{welp:condensationSingleCell2016}
T.~Welp, G.~Wolf, M.~Hirn, S.~Krishnaswamy, A diffusion-based condensation
  process for multiscale analysis of single cell data, in: ICML Workshop
  Computational Biology, New York, NY, 2016, pp. 1--5, 5 pages.

\bibitem{Wang:2012}
B.~Wang, J.~Jiang, W.~Wang, Z.-H. Zhou, Z.~Tu, Unsupervised metric fusion by
  cross diffusion, in: Computer Vision and Pattern Recognition (CVPR), 2012
  IEEE Conference on, 2012, pp. 2997--3004.
\newblock \href {http://dx.doi.org/10.1109/CVPR.2012.6248029}
  {\path{doi:10.1109/CVPR.2012.6248029}}.

\bibitem{Hirn:2014}
R.~R. Coifman, M.~J. Hirn,
  \href{http://www.sciencedirect.com/science/article/pii/S1063520313000225}{Diffusion
  maps for changing data}, Applied and Computational Harmonic Analysis 36~(1)
  (2014) 79 -- 107.
\newblock \href
  {http://dx.doi.org/http://dx.doi.org/10.1016/j.acha.2013.03.001}
  {\path{doi:http://dx.doi.org/10.1016/j.acha.2013.03.001}}.
\newline\urlprefix\url{http://www.sciencedirect.com/science/article/pii/S1063520313000225}

\bibitem{Lederman:2014}
R.~R. Lederman, R.~Talmon, Common manifold learning using
  alternating-diffusion, Tech. rep., Yale (2014).

\bibitem{Lederman:2015b}
R.~R. Lederman, R.~Talmon, H.~t.~Wu, Y.~L. Lo, R.~R. Coifman, Alternating
  diffusion for common manifold learning with application to sleep stage
  assessment, in: 2015 IEEE International Conference on Acoustics, Speech and
  Signal Processing (ICASSP), 2015, pp. 5758--5762.
\newblock \href {http://dx.doi.org/10.1109/ICASSP.2015.7179075}
  {\path{doi:10.1109/ICASSP.2015.7179075}}.

\bibitem{Lindenbaum:multiviewDiff2015}
O.~Lindenbaum, A.~Yeredor, M.~Salhov, A.~Averbuch, Multiview diffusion maps,
  arXiv:1508.05550 (2015).

\bibitem{Talmon:2016}
R.~{Talmon}, H.-t. {Wu}, {Latent common manifold learning with alternating
  diffusion: analysis and applications}, ArXiv e-prints\href
  {http://arxiv.org/abs/1602.00078} {\path{arXiv:1602.00078}}.

\bibitem{banisch:dmTrajectory2016}
R.~Banisch, P.~Koltai, Understanding the geometry of transport: Diffusion maps
  for lagrangian trajectory data unravel coherent sets, Chaos 27 (2017) 035804,
  arXiv:1603.04709.

\bibitem{Berry:2015}
T.~Berry, J.~Harlim,
  \href{http://www.sciencedirect.com/science/article/pii/S1063520315000020}{Variable
  bandwidth diffusion kernels}, Applied and Computational Harmonic Analysis
  1~(0) (2015) --.
\newblock \href
  {http://dx.doi.org/http://dx.doi.org/10.1016/j.acha.2015.01.001}
  {\path{doi:http://dx.doi.org/10.1016/j.acha.2015.01.001}}.
\newline\urlprefix\url{http://www.sciencedirect.com/science/article/pii/S1063520315000020}

\bibitem{berard:embedManifoldHeatKer1994}
P.~B\'{e}rard, G.~Besson, S.~Gallot, Embedding {R}iemannian manifolds by their
  heat kernel, Geometric and Functional Analysis 4~(4) (1994) 373--398.

\bibitem{Abdallah:2010}
H.~Abdallah, Processus de diffusion sur un flot de vari\'{e}t\'{e}s
  {R}iemanniennes, Ph.D. thesis, L'Universite de Grenoble (2010).

\bibitem{abdallah:embedTimeManifold2012}
H.~Abdallah, Embedding {R}iemannian manifolds via their eigenfunctions and
  their heat kernel, Bulletin of the Korean Mathematical Society 49~(5) (2012)
  939--947.

\bibitem{Rutishauser:1969}
H.~Rutishauser, \href{http://dx.doi.org/10.1007/BF02165269}{Computational
  aspects of f. l. bauer's simultaneous iteration method}, Numer. Math. 13~(1)
  (1969) 4--13.
\newblock \href {http://dx.doi.org/10.1007/BF02165269}
  {\path{doi:10.1007/BF02165269}}.
\newline\urlprefix\url{http://dx.doi.org/10.1007/BF02165269}

\bibitem{Guenther:2002}
C.~M. Guenther, The fundamental solution on manifolds with time-dependent
  metrics, The Journal of Geometric Analysis 12~(3) (2002) 425--436.
\newblock \href {http://dx.doi.org/10.1007/BF02922048}
  {\path{doi:10.1007/BF02922048}}.

\bibitem{Chow:2008}
B.~Chow, S.-C. Chu, D.~Glickenstein, C.~Guenther, J.~Isenber, T.~Ivey,
  D.~Knopf, P.~Lu, F.~Luo, L.~Ni, The Ricci Flow: Techniques and Applicatoins
  Part III: Geometric-Analytic Aspects, Vol. 163, AMS, 2008.

\bibitem{Giscard:2014}
P.-L. Giscard, K.~Lui, S.~J. Thwaite, D.~Jaksch, An exact formulation of the
  time-ordered exponential using path-sums, ArXiv e-prints\href
  {http://arxiv.org/abs/1410.6637} {\path{arXiv:1410.6637}}.

\bibitem{Mousazadeh:2015}
S.~Mousazadeh, I.~Cohen,
  \href{http://www.sciencedirect.com/science/article/pii/S0165168414004885}{Out-of-sample
  extension of band-limited functions on homogeneous manifolds using diffusion
  maps}, Signal Processing 108~(0) (2015) 521 -- 529.
\newblock \href
  {http://dx.doi.org/http://dx.doi.org/10.1016/j.sigpro.2014.10.024}
  {\path{doi:http://dx.doi.org/10.1016/j.sigpro.2014.10.024}}.
\newline\urlprefix\url{http://www.sciencedirect.com/science/article/pii/S0165168414004885}

\bibitem{Singer:2006}
A.~Singer,
  \href{http://www.sciencedirect.com/science/article/pii/S1063520306000510}{From
  graph to manifold laplacian: The convergence rate}, Applied and Computational
  Harmonic Analysis 21~(1) (2006) 128 -- 134, special Issue: Diffusion Maps and
  Wavelets.
\newblock \href
  {http://dx.doi.org/http://dx.doi.org/10.1016/j.acha.2006.03.004}
  {\path{doi:http://dx.doi.org/10.1016/j.acha.2006.03.004}}.
\newline\urlprefix\url{http://www.sciencedirect.com/science/article/pii/S1063520306000510}

\bibitem{Pedersen:2000}
M.~Pedersen, Functional analysis in applied mathematics and engineering,
  Chapman \& Hall : CRC Press, 2000.

\end{thebibliography}
\end{document}